\definecolor{dark-red}{rgb}{0.4,0.15,0.15}
\definecolor{dark-blue}{rgb}{0.15,0.15,0.4}
\definecolor{medium-blue}{rgb}{0,0,0.5}
\DeclareFontFamily{OT1}{pzc}{}
\DeclareFontShape{OT1}{pzc}{m}{it}{<-> s * [1.15] pzcmi7t}{}
\DeclareMathAlphabet{\mathpzc}{OT1}{pzc}{m}{it}
\newcommand{\address}[1]{\vspace{-2em}\begin{center}{\footnotesize #1}\end{center}}
\newcommand{\mirror}[1]{\widetilde{#1}}
\newcommand{\A}{\mathpzc{A}}
\newcommand{\C}{\mathpzc{C}}
\newcommand{\HH}{\mathpzc{H}}
\newcommand{\PP}{\mathpzc{P}}
\newcommand{\Fac}{\mathpzc{F} \!}
\newcommand{\Rch}{\mathpzc{Rch}}
\newcommand{\Spe}{\mathpzc{Spe}}
\newcommand{\Bal}{\mathpzc{Bal} \!}
\newcommand{\Rev}{\mathpzc{Rev}}
\newcommand{\Pal}{\mathpzc{Pal} \!}
\newcommand{\Pri}{\mathpzc{Pri}}
\newcommand{\JSP}{\mathpzc{JSP}}
\newcommand{\Alphabet}{\mathpzc{Alph}}
\declaretheorem[numberwithin=section,refname={theorem,theorems},Refname={Theorem,Theorems}]{theorem}
\declaretheorem[sibling=theorem,style=definition]{definition}
\declaretheorem[sibling=theorem,name=Lemma]{lemma}
\declaretheorem[sibling=theorem,name=Proposition]{proposition}
\declaretheorem[sibling=theorem,name=Corollary]{corollary}
\declaretheorem[numbered=no,name=Conjecture]{conjecture}
\newcommand{\keywords}[1]{\par\noindent{\footnotesize{\em Keywords\/}: #1}}
\begin{document}
  \title{Introducing Privileged Words: Privileged Complexity of Sturmian Words}
  \author{Jarkko Peltomäki\\
          \small \href{mailto:jspelt@utu.fi}{jspelt@utu.fi}}
  \date{}
  \maketitle
  \address{Turku Centre for Computer Science TUCS, 20520 Turku, Finland\\
  University of Turku, Department of Mathematics and Statistics, 20014 Turku, Finland}

  \noindent
  \hrulefill
  \begin{abstract}
    \vspace{-1em}
    \noindent
    In this paper we study the class of so-called \emph{privileged words} which have been
    previously considered only a little. We develop the basic properties of privileged words,
    which turn out to share similar properties with palindromes. Privileged words are
    studied in relation to previously studied classes of words, \emph{rich words},
    \emph{Sturmian words} and \emph{episturmian words}. A new characterization of Sturmian words is given in terms of
    \emph{privileged complexity}. The privileged complexity of the Thue-Morse word is also briefly
    studied.
    \vspace{1em}
    \keywords{combinatorics on words, sturmian words, palindromes, privileged words, return words, rich words}
    \vspace{-1em}
  \end{abstract}
  \hrulefill

  \section{Introduction}
  This work concerns a new class of words named privileged words which have previously been researched only a little. The
  motivation for defining these words comes from the research of so-called rich words \cite{2009:palindromic_richness}
  which are words having maximum number of distinct palindromes (thus the name, rich words are rich in palindromes).
  An important property of rich words is that a word is rich if and only if every complete first return to a palindrome is
  a palindrome. It's equivalent to say “every palindrome is a complete first return to a shorter palindrome”. By a slight
  alteration of this condition we define privileged words: a word is privileged if it's a complete first return to a
  shorter privileged word. Moreover we need to define that the empty word and the letters of the alphabet are privileged.
  The effect of this modification is that every word is rich in privileged words, i.e. every word $w$ has exactly $|w|+1$
  distinct privileged factors whereas a rich word $w$ has exactly $|w|+1$ distinct palindromes (there exist words which
  have strictly less palindromic factors). It turns out that privileged words and palindromes have some similar properties.
  This paper introduces the basic properties of privileged words, and questions regarding so-called privileged complexity
  of Sturmian words, episturmian words and the Thue-Morse word are studied.

  After introducing the notations and definitions, in \autoref{sec:privileged_words} privileged words and their
  basic properties are presented. These basic results emphasize the analogue between palindromes and privileged words.
  Moreover privileged words are studied in relation to rich words.

  \autoref{sec:return_factors} studies the number of distinct privileged factors in finite words. There's also discussion
  how privileged words fit into a recent work of G. Fici and Z. Lipták \cite{2012:words_with_the_smallest_number_of_closed_factors}.

  Various complexity functions of infinite words have been previously considered. In \autoref{sec:sturmian_words} the notion
  of privileged complexity is defined. This section contains the main result of this paper: a characterization of Sturmian
  words using privileged complexity. As a by-product of the methods used in the proof of the main result, we obtain with
  little extra effort some previously known results, namely the fact that Sturmian words are rich, and partially a result
  of X. Droubay and G. Pirillo concerning the palindromic complexity of Sturmian words
  \cite{1999:palindromes_and_sturmian_words}. The section is concluded with a brief study of the privileged complexity of
  episturmian words.

  The last section studies briefly the privileged complexity of the Thue-Morse word. It's proven that the Thue-Morse word
  doesn't contain a privileged factor of odd length greater than three. However the even case is left open. Some numerical
  data and a conjecture are provided.

  \section{Notation and Terminology}
  In this text, we denote by $A$ a finite \emph{alphabet}, which is a finite non-empty set of symbols. The elements
  of $A$ are called \emph{letters}. A (finite) \emph{word} over $A$ is a sequence of letters. To the empty sequence
  corresponds the \emph{empty word}, denoted by $\varepsilon$. The set of all finite words over $A$ is denoted by $A^*$.
  The set of non-empty words over $A$ is the set $A^+ := A^* \setminus \{\varepsilon\}$. A natural operation of words
  is concatenation. Under this operation $A^*$ is a free monoid over $A$. The letters occurring in the word $w$ form
  the \emph{alphabet of $w$} denoted by $\Alphabet(w)$. From now on we assume that binary words are over the alphabet
  $\{0,1\}$. For binary words we define the \emph{exchange operation}: $\hat{0} = 1$ and $\hat{1}=0$. Given a finite word
  $w = a_1 a_2 \cdots a_n$ of $n$ letters, we say that the \emph{length} of $w$, denoted by $|w|$, is equal to $n$. By
  convention the length of the empty word is $0$.  We also denote by $|w|_a$ the number of occurrences of the letter $a$
  in $w$. The set of all words of length $n$ over the alphabet $A$ is denoted $A^n$.

  An \emph{infinite word} $w$ over $A$ is a function from the natural numbers to $A$. We consider such a function as a
  sequence indexed by the natural numbers with values in $A$. We write consicely $w = a_1 a_2 a_3 \cdots$ with $a_i \in A$.
  The set of infinite words is denoted by $A^\omega$. The infinite word $w$ is said to be \emph{ultimately periodic} if
  it can be written in the form $w = uv^\omega = uvvv \cdots$ for some words $u,v \in A^*$, $v \neq \varepsilon$. If
  $u = \varepsilon$, then $w$ is said to be \emph{periodic}. An infinite word which is not ultimately periodic is said to
  be \emph{aperiodic}.

  A finite word $u$ is a \emph{factor} of the finite or infinite word $w$ if it can be written that $w = zuv$ for some
  $z \in A^*$ and $v \in A^* \cup A^\omega$. If $z = \varepsilon$, the factor $u$ is called a \emph{prefix} of $w$. If
  $v = \varepsilon$, then we say that $u$ is a \emph{suffix} of $w$. If word $u$ is both a prefix and a suffix of $w$, then
  $u$ is a \emph{border} of $w$. The set of factors of $w$ is denoted by $\Fac(w)$. The set $\Fac_n(w)$ is defined to
  contain all factors of $w$ of length $n$. A set of words $X$ is \emph{factorial} if every factor of $w$ is a member of
  $X$ for all $w \in X$. If $w = a_1 a_2 \cdots a_n$, then we denote $w[i,j] = a_i \cdots a_j$ whenever the choices of
  positions $i$ and $j$ make sense. This notion is extended to infinite words in a natural way. An \emph{occurrence} of $u$
  in $w$ is such a position $i$, that $w[i,i+|u|-1] = u$. If such a position exists, we say that $u$ \emph{occurs} in $w$.
  If $w$ has exactly one occurrence of $u$, then we say that $u$ is \emph{unioccurrent} in $w$. We say that a position
  $i$ \emph{introduces} a factor $u$ if $w[i - |u| + 1, i] = u$, and $u$ is unioccurrent in $w[1,i]$. A
  \emph{complete first return} to the word $u$ is a word starting and ending with $u$, and containing exactly two
  occurrences of $u$. A word which is a complete first return to some word is called a \emph{complete return word}. A
  \emph{complete return factor} is a factor of some word which is a complete return word.

  The \emph{reversal} $\mirror{w}$ of $w = a_1 a_2 \cdots a_n$ is the word $\mirror{w} = a_n \cdots a_2 a_1$. If
  $\mirror{w} = w$, then we say that $w$ is a \emph{palindrome}. By convention the empty word is a palindrome. The set of
  palindromes of $w$ is denoted by $\Pal(w)$. Moreover we define $\Pal_n(w) = \Pal(w) \cap \Fac_n(w)$.
  
  Let $w = au$ where $a \in A$ and $u \in A^*$. We define the \emph{circular shift operation} $T$ as follows: $T(w) = ua$.
  By applying this shift operation repeatedly we obtain at most $|w|$ distinct words, called the \emph{conjugates} of $w$.

  Let $A$ and $B$ be two alphabets. A \emph{morphism} from $A^*$ to $B^*$ is a mapping $f: A^* \to B^*$ such that
  $f(uv) = f(u)f(v)$ for all words $u, v \in A^*$. Because of this morphic property, the morphism $f$ is fully determined
  by its images on the letters. The morphism $f$ is said to be \emph{non-erasing} if for every $a \in A$, $f(a) \in A^+$. 
  A non-erasing morphism naturally extends to infinite words: for an infinite word $w = a_1 a_2 a_3 \cdots$,
  $f(w) = f(a_1) f(a_2) f(a_3) \cdots$. The morphism $f$ is \emph{prolongable} if there exists a letter $a$ such that
  $f(a) = aw$ for some $w \in A^+$. An infinite word $w$ may be a \emph{fixed point} of a morphism, i.e. $f(w) = w$. For a
  prolongable morphism $f$ we have that $f^n(a)$ is a prefix of $f^{n+1}(a)$ for all $n \geq 0$. Thus we obtain a unique
  fixed point $f^\omega(a) := \lim_{n \to \infty} f^n(a)$.

  Given an infinite word $w$ over the alphabet $A$ we say that a factor $u$ of $w$ is \emph{right special} (resp.
  \emph{left special}) if $ua$ and $ub$ (resp. $au$ and $bu$) are both factors of $w$ for some distinct letters $a$ and
  $b$. A factor that is both right and left special is called \emph{bispecial}.

  A set of binary words $X$ is \emph{balanced} if for all $n \geq 0$ and every word $u$ and $v$ of $X$ of length $n$ it
  holds that $||u|_1 - |v|_1| \leq 1$. A binary (finite or infinite) word is said to be balanced if its set of factors is balanced.

  \section{Privileged Words}\label{sec:privileged_words}
  Privileged words are a less known class of words which were recently introduced in
  \cite{2011:a_characterization_of_subshifts_with_bounded_powers}. We define the set $\Pri_A$, the set of
  \emph{privileged words over $A$}, recursively as follows:
  \begin{itemize}
    \item[-] $\varepsilon \in \Pri_A$,
    \item[-] $a \in \Pri_A$ for every letter $a$ in the alphabet,
    \item[-] if $|w| \geq 2$, then $w \in \Pri_A$ if $w$ is a complete first return to a shorter privileged word.
  \end{itemize}
  When the alphabet is known from context, we omit the subscript $A$. Given a word $w$, we denote
  \begin{align*}
    \Pri(w) = \{u \in \Fac(w) : u \, \text{is privileged}\}.
  \end{align*}
  The set $\Pri_n(w)$ is defined to contain all privileged factors of $w$ of length $n$.

  The first few binary privileged words are
  \begin{align*}
    \varepsilon, 0, 1, 00, 11, 000, 111, 010, 101.
  \end{align*}
  Not every privileged word needs to be a palindrome, for example the words $00101100$ and $0120$ are privileged, but not
  palindromic. However privileged words and palindromes have some analogous properties, as we shall soon see.

  \begin{lemma}\label{lem:privileged_prefix_suffix}
    Let $w$ be a privileged word, and $u$ its any privileged prefix (respectively suffix). Then $u$ is a suffix
    (respectively prefix) of $w$.
  \end{lemma}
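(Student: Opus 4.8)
The plan is to prove the statement about prefixes by strong induction on $|w|$, and then to obtain the statement about suffixes by a symmetric argument (alternatively, by applying the prefix case to the reversal $\mirror{w}$, since the reversal of a complete first return to $v$ is a complete first return to $\mirror{v}$, so that reversals of privileged words are again privileged). The base cases $|w| \leq 1$ are immediate: the only privileged prefixes of $\varepsilon$ or of a single letter are $\varepsilon$ and $w$ itself, both of which are suffixes of $w$.

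For the inductive step, assume $|w| = n \geq 2$. By definition $w$ is a complete first return to some privileged word $v$; here $v \neq \varepsilon$ (a word of length $\geq 2$ is not a complete first return to $\varepsilon$), so $1 \leq |v| < n$. Hence $v$ is simultaneously a prefix and a suffix of $w$, and its only two occurrences in $w$ sit at positions $1$ and $n - |v| + 1$. Now let $u$ be a privileged prefix of $w$. The easy case is $|u| \leq |v|$: then $u$ is a prefix of $v$, so the induction hypothesis applied to $v$ (which is shorter than $w$) gives that $u$ is a suffix of $v$, and since $v$ is a suffix of $w$, so is $u$.

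The crux is the remaining case $|u| > |v|$, where I claim $u = w$. The point is to feed $u$ itself, rather than $v$, into the induction hypothesis: since $u$ and $v$ are both prefixes of $w$ with $|v| < |u|$, the word $v$ is a privileged prefix of $u$, so if $u \neq w$ (hence $|u| < n$) the induction hypothesis forces $v$ to be a suffix of $u$ as well. Then $v$ occurs in $u$ at the two distinct positions $1$ and $|u| - |v| + 1$. But $u$ is exactly the length-$|u|$ prefix of $w$, and the second occurrence of $v$ in $w$ lies at position $n - |v| + 1 > |u| - |v| + 1$, hence outside $u$; so $v$ occurs in $u$ only at position $1$, a contradiction. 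Therefore $u = w$, which is trivially a suffix of itself, completing the induction.

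I expect the main obstacle to be recognizing, in the case $|u| > |v|$, that one should apply the induction hypothesis to $u$ and derive a contradiction from a count of occurrences of $v$; the naive alternative — writing $u$ as a complete first return to some privileged $v'$ and comparing $|v'|$ with $|v|$ — works too but forces a more cumbersome case analysis. Once the right word is chosen, everything else is routine.
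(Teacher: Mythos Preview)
Your proof is correct and follows essentially the same approach as the paper: strong induction on $|w|$, splitting on the comparison of $|u|$ with the word $v$ to which $w$ is a complete first return, applying the hypothesis to $v$ when $|u|\leq|v|$ and to $u$ when $|u|>|v|$ to force three occurrences of $v$ in $w$. The only cosmetic difference is that the paper disposes of $u=w$ at the outset and then shows the case $|v|<|u|$ is vacuous, whereas you phrase that case as forcing $u=w$.
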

  \begin{proof}
    If $|w| \leq 1$ or $u = w$, then the claim is clear. Suppose that $|w| \geq 2$ and $|u| < |w|$. By definition $w$ is a
    complete first return to a shorter privileged word $v$. If $|v| < |u|$, then by induction $v$ is a suffix of $u$, and
    thus $v$ would have at least three occurrences in $w$ which is impossible. If $u = v$, then the claim is clear. Finally
    assume that $|v| > |u|$, then by induction $u$ is a suffix of $v$, and thus a suffix of $w$. The proof in the case that
    the roles of prefix and suffix are reversed is symmetric.
  \end{proof}

  The above Lemma is the first analogue to palindromes: a palindromic prefix of a palindrome occurs also as a suffix.

  \begin{lemma}
    Let $w$ be a privileged word, and $u$ its longest proper privileged prefix (suffix). Then $w$ is a complete first
    return to $u$. In other words the longest proper privileged prefix (suffix) of $w$ is its longest proper privileged border.
  \end{lemma}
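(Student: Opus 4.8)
The plan is to show that the shorter privileged word to which $w$ is a complete first return must in fact be $u$ itself; everything else then follows at once. First I would dispose of the trivial cases $|w|\le 1$, where the only proper privileged prefix of $w$ is $\varepsilon$ and the assertion is immediate (or vacuous), and assume from now on that $|w|\ge 2$. By the recursive definition of privileged words there is a shorter privileged word $v$ such that $w$ is a complete first return to $v$. Since $w$ begins with $v$, the word $v$ is a proper privileged prefix of $w$, hence $|v|\le|u|$ by maximality of $u$. If $|v|=|u|$ then $v=u$ (two prefixes of $w$ of equal length), and we are done; so it remains to rule out $|v|<|u|$.

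Assume $|v|<|u|$. By \autoref{lem:privileged_prefix_suffix} the privileged prefix $u$ of the privileged word $w$ also occurs as a suffix of $w$, and since $|u|<|w|$ this suffix occurrence, which begins at position $|w|-|u|+1\ge 2$, differs from the prefix occurrence. Now $v$ and $u$ are both prefixes of $w$ with $|v|<|u|$, so $v$ is a prefix of $u$, and consequently $v$ occurs in $w$ at position $|w|-|u|+1$. I would then verify that this is a genuinely new occurrence of $v$: it does not begin at position $1$ (because $|w|-|u|+1\ge 2$), and it does not end at position $|w|$ (it ends at $|w|-|u|+|v|<|w|$, using $|v|<|u|$), so it coincides with neither of the two occurrences of $v$ furnished by the complete‑return structure. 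Thus $v$ would have at least three occurrences in $w$, contradicting the definition of a complete first return.

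Therefore $v=u$, i.e.\ $w$ is a complete first return to $u$; the statement for the longest proper privileged suffix is obtained by the symmetric argument, using the suffix half of \autoref{lem:privileged_prefix_suffix}. For the reformulation ``in other words'', I would just note that by \autoref{lem:privileged_prefix_suffix} every privileged prefix of $w$ is a border of $w$, while conversely a privileged border is in particular a privileged prefix; hence the longest proper privileged prefix and the longest proper privileged border of $w$ are the same word, and we have shown that $w$ is a complete first return to it.

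The only step needing any care is the bookkeeping in the second paragraph — checking that the exhibited occurrence of $v$ really is distinct from the prefix and suffix occurrences — and this is exactly where the hypotheses $|v|<|u|$ and $|u|<|w|$ get used; the rest is immediate from the definitions together with \autoref{lem:privileged_prefix_suffix}. (Incidentally, the same argument shows that the privileged word to which $w$ is a complete first return is unique.)
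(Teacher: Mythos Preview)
Your proof is correct and follows essentially the same approach as the paper: dispose of $|w|\le 1$, let $v$ be the shorter privileged word to which $w$ is a complete first return, note $|v|\le|u|$ by maximality, and then derive a third occurrence of $v$ from \autoref{lem:privileged_prefix_suffix} in the case $|v|<|u|$. The only cosmetic difference is that the paper applies \autoref{lem:privileged_prefix_suffix} to the pair $(u,v)$ (so $v$ is a suffix of $u$, hence occurs a third time near the beginning of $w$), whereas you apply it to $(w,u)$ (so $u$ is a suffix of $w$, and the prefix $v$ of $u$ then occurs a third time near the end of $w$); your version is a bit more explicit in checking that this third occurrence is genuinely distinct.
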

  \begin{proof}
    If $|w| \leq 1$, then the claim is clear. Suppose that $|w| \geq 2$, and that $w$ is a complete first return to
    privileged word $v$. Now if $|u| > |v|$, then $v$ is a prefix of $u$, and thus by \autoref{lem:privileged_prefix_suffix}
    also a suffix of $u$. Hence $w$ has at least three occurrences of $v$, a contradiction. Therefore $|u| \leq |v|$, and
    by the maximality of $u$, $u = v$, which proves the claim. The proof in the case that the roles of prefix and suffix
    are reversed is symmetric.
  \end{proof}

  \begin{lemma}
    Let $w$ be a privileged word, and suppose that it has border $u$. Then $u$ is privileged.
  \end{lemma}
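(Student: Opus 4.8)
The plan is to prove this by induction on $|w|$. The base case $|w| \le 1$ is immediate: the only borders of such a $w$ are $\varepsilon$ and $w$ itself, both of which are privileged by definition. So suppose $|w| \ge 2$ and that the statement holds for every privileged word strictly shorter than $w$. If the border $u$ equals $\varepsilon$ or $w$, there is nothing to prove, so I may assume $0 < |u| < |w|$. By definition $w$ is a complete first return to some shorter privileged word $v$; since $|w| \ge 2$ and $|v| < |w|$, the word $v$ occurs exactly twice in $w$, namely as the prefix $w[1,|v|]$ and as the suffix $w[|w|-|v|+1,|w|]$, and these two occurrences are distinct.

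First I would compare $|u|$ with $|v|$. If $|u| \le |v|$, then since $u$ and $v$ are both prefixes of $w$ the shorter one $u$ is a prefix of $v$, and since $u$ and $v$ are both suffixes of $w$ the shorter one $u$ is a suffix of $v$; hence $u$ is a border of $v$. As $|v| < |w|$, the induction hypothesis applied to the privileged word $v$ yields that $u$ is privileged, and we are done.

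Next I would rule out the remaining possibility $|v| < |u| < |w|$. In that case the same nesting argument shows that $v$ is both a prefix and a suffix of $u$, so $v$ occurs in $u$ at position $1$ and at position $|u| - |v| + 1$; these occurrences are distinct because $|u| > |v|$. But $u = w[1,|u|]$ is a proper prefix of $w$, and the occurrence of $v$ at position $|w| - |v| + 1$ does not lie inside $w[1,|u|]$ because $|w| > |u|$; hence $w[1,|u|]$ contains at most one occurrence of $v$, contradicting the previous sentence. Thus $|v| < |u| < |w|$ is impossible, and together with the first case this completes the induction.

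The only delicate point is the bookkeeping of occurrences in the last step: one must use that $w$ is a complete first return to $v$ (so that $v$ has \emph{exactly} two occurrences in $w$, not merely that $w$ starts and ends with $v$) in order to conclude that the proper prefix $u$ of $w$ can contain at most one occurrence of $v$. Everything else reduces to the elementary fact that two prefixes (respectively two suffixes) of a common word are nested, plus a length comparison.
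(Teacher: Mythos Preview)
Your proof is correct and follows essentially the same induction as the paper: reduce to showing that $u$ is a border of the shorter privileged word $v$ and apply the induction hypothesis. The only cosmetic difference is that the paper disposes of your Case~2 in one line by invoking that $v$ is the longest proper border of $w$ (an immediate consequence of $w$ being a complete first return to $v$), whereas you re-derive this via the occurrence-counting argument; the content is the same.
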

  \begin{proof}
    If $|w| \leq 1$, the claim is clear. Suppose that $|w| \geq 2$, and that $w$ is a complete first return to privileged
    word $v$. Since $v$ is the longest proper border of $w$, we may assume that $|u| < |v|$. Now $u$ is a prefix of $v$, and
    since $u$ is a suffix of $w$ and $v$ is a suffix of $w$, also $u$ is a suffix of $v$. Thus $u$ is a border of $v$, and
    by induction, a privileged word.
  \end{proof}

  Palindromes share this property too: every border of a palindrome is a palindrome.

  The study of so-called rich words was initiated in \cite{2009:palindromic_richness}. Rich words are words having maximum
  number of distinct palindromic factors. In the following definition we count $\varepsilon$ as a palindromic factor.

  \begin{definition}
    A word $w$ is \emph{rich} if it has exactly $|w| + 1$ distinct palindromic factors. An infinite word is rich if its
    every factor is rich.
  \end{definition}

  Next we state a useful characterization of rich words proven in \cite{2009:palindromic_richness}.

  \begin{theorem}\label{thm:richness_characterization}
    For any finite or infinite word $w$, the following properties are equivalent:
    \begin{enumerate}[(i)]
      \item w is rich,
      \item every factor of $w$ which is a complete first return to a palindrome is itself a palindrome. \qed
    \end{enumerate}
  \end{theorem}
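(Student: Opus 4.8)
The plan is to route both implications through the standard description of richness via longest palindromic suffixes. For a word $z$ let $\operatorname{lps}(z)$ denote its longest palindromic suffix. I would begin by recording the folklore facts. If $v$ is a palindromic suffix of $z$ with $|v| < |\operatorname{lps}(z)|$, then $v$ is a proper suffix of $\operatorname{lps}(z)$ and hence, both being palindromes, also a prefix of $\operatorname{lps}(z)$; so $v$ has an occurrence in $z$ ending strictly before position $|z|$, and in particular $v$ occurs at least twice in $z$. It follows that appending a letter $a$ to a word $x$ adds at most one new palindromic factor --- a new palindrome must be a suffix of $xa$, and if it were shorter than $\operatorname{lps}(xa)$ it would already occur in $x$ --- and that a new palindrome appears precisely when $\operatorname{lps}(xa)$ is unioccurrent in $xa$. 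Running this along the prefixes of a finite word $w$ yields $|\Pal(w)| \le |w| + 1$, with equality exactly when $\operatorname{lps}(p)$ is unioccurrent in $p$ for every prefix $p$ of $w$. Hence a finite word is rich iff each of its prefixes has a unioccurrent longest palindromic suffix (so prefixes of rich words are rich), and an infinite word is rich iff each of its factors does. I would state all of this as a preliminary lemma.

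For $(ii)\Rightarrow(i)$: let $f$ be any factor of $w$ and put $s=\operatorname{lps}(f)$. If $s$ occurred at least twice in $f$, its last two occurrences would delimit a complete first return $\varrho$ to $s$; since $s$ is a suffix of $f$, so is $\varrho$, and being a factor of $w$ it is a palindrome by $(ii)$. But $|\varrho|>|s|$, so $\varrho$ would be a palindromic suffix of $f$ strictly longer than $\operatorname{lps}(f)$ --- a contradiction. Thus $\operatorname{lps}(f)$ is unioccurrent in every factor $f$ of $w$, and by the preliminary lemma $w$ is rich, in both the finite and the infinite case.

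For $(i)\Rightarrow(ii)$, which is the substantial direction, I would argue by strong induction on the position at which an occurrence of the return factor ends. Let $r$ be a factor of $w$ that is a complete first return to a palindrome $u$, fix an occurrence of $r$ ending at position $i$, and set $s=\operatorname{lps}(w[1,i])$. The prefix $w[1,i]$ is rich, so $s$ is unioccurrent in it. Since $r$, and hence $u$, is a suffix of $w[1,i]$ we have $|u|\le|s|$; and $|u|=|s|$ is impossible, for then $s=u$ would be unioccurrent while $r$, a suffix of $w[1,i]$, contains $u$ twice. So $|u|<|s|$, whence $u$ is a proper palindromic suffix, and therefore a border, of the palindrome $s$. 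Next I would rule out $|r|>|s|$: in that case $s$ is a proper suffix of $r$ and so contains only the right-hand copy of $u$ taken from $r$, while $u$ being a border of $s$ supplies a second copy of $u$ in $s$ --- a contradiction. Hence $r$ is a suffix of $s$, and it is exactly the complete first return to $u$ cut out by the last two occurrences of $u$ in $s$. If $u$ occurs only twice in $s$, then $r=s$ is a palindrome and we are done. Otherwise $u$ occurs at least three times in $s$; writing $p_2$ for the position of its second occurrence and $p$ for that of its penultimate one, so that $r=s[p,|s|]$, the palindromic symmetry of $s$ gives $p=|s|-|u|+2-p_2$, hence $\mirror{r}=s[1,p_2+|u|-1]$, a prefix of $s$. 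This prefix is again a complete first return to $u$ (reversal preserves that property), and it has an occurrence in $w$ ending at position $i-|s|+(p_2+|u|-1)$, which is strictly less than $i$ because $p_2<|s|-|u|+1$. By the induction hypothesis $\mirror{r}$ is a palindrome, and therefore $r=\mirror{\mirror{r}}=\mirror{r}$, so $r$ is a palindrome.

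The main obstacle is closing the direction $(i)\Rightarrow(ii)$. The naive induction on $|r|$ fails because the reversal step replaces $r$ by a return word of the same length; the parameter that works is the endpoint of an occurrence, and the two delicate points are (a) the length inequalities $|u|<|s|$ and $|r|\le|s|$, which force $r$ to sit inside $s$, and (b) using the palindromic symmetry of $s$, when $u$ occurs three or more times, to exhibit a genuinely earlier occurrence of $\mirror{r}$ to which the induction can be applied. The other implication and the preliminary lemma on palindromic suffixes are routine bookkeeping.
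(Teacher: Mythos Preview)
The paper does not prove this theorem: it is quoted from \cite{2009:palindromic_richness} and marked with \qed\ immediately after the statement, so there is nothing to compare against on the paper's side.

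Your argument, however, is correct and self-contained. The preliminary lemma is the standard ``each position introduces at most one palindrome, namely the longest palindromic suffix if it is unioccurrent'' fact, and your use of it in $(ii)\Rightarrow(i)$ is clean. For $(i)\Rightarrow(ii)$ the induction on the ending position $i$ of an occurrence of $r$, rather than on $|r|$, is exactly the right choice, and the two inequalities $|u|<|s|$ and $|r|\le|s|$ are justified correctly (the second because a third occurrence of $u$ inside $r$ would contradict $r$ being a complete first return). The symmetry step---using that the occurrence positions of the palindrome $u$ inside the palindrome $s$ are sent to one another by $j\mapsto |s|-|u|+2-j$, so that the suffix return $r$ reflects to the prefix return $\mirror{r}=s[1,p_2+|u|-1]$---is accurate, and the inequality $p_2<|s|-|u|+1$ (which holds because there are at least three occurrences) indeed guarantees that $\mirror{r}$ ends strictly before $i$, so the induction applies. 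One cosmetic remark: in the line ruling out $|r|>|s|$, the contradiction you reach is really that $r$ would contain a \emph{third} occurrence of $u$ (the one at the prefix of $s$), not that $s$ contains two; you might say this explicitly.
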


  The fact that the condition in the next Proposition is necessary was proved in
  \cite{2011:a_characterization_of_subshifts_with_bounded_powers}.

  \newpage
  \begin{proposition}\label{prp:rich_privileged=palindrome}
    Let $w$ be a word. Then $w$ is rich if and only if $\Pri(w) = \Pal(w)$.
  \end{proposition}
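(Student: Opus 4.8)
The plan is to handle the two directions separately and, within the forward direction, to establish the two inclusions $\Pri(w)\subseteq\Pal(w)$ and $\Pal(w)\subseteq\Pri(w)$ one at a time. Both inclusions go by strong induction on word length, with base cases $\varepsilon$ and the single letters, which are simultaneously privileged and palindromic. \autoref{thm:richness_characterization} (richness $\Leftrightarrow$ every complete first return factor of a palindrome is itself a palindrome) is the engine throughout.

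Assume first that $w$ is rich. For $\Pri(w)\subseteq\Pal(w)$: given a privileged factor $u$ with $|u|\ge 2$, write $u$ as a complete first return to a strictly shorter privileged word $v$; since $v$ is a factor of $w$, the induction hypothesis makes $v$ a palindrome, so $u$ is a complete first return factor of the palindrome $v$ in the rich word $w$, hence a palindrome by \autoref{thm:richness_characterization}. For $\Pal(w)\subseteq\Pri(w)$: given a palindromic factor $p$ with $|p|\ge 2$, let $q$ be its longest proper palindromic prefix; $q$ is nonempty because the first and last letters of $p$ agree, and since $p$ is a palindrome the palindromic prefixes of $p$ are exactly its palindromic suffixes, so $q$ is also the longest proper palindromic suffix of $p$, in particular a border of $p$. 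Let $r$ be the factor of $p$ that begins at the second-rightmost occurrence of $q$ in $p$ and ends where $p$ ends: it is a complete first return to $q$, it is a factor of $w$, and $|r|>|q|$. By \autoref{thm:richness_characterization}, $r$ is a palindrome; being a palindromic suffix of $p$ with $|r|>|q|$, the maximality of $q$ as the longest proper palindromic suffix forces $r=p$. Hence $p$ is a complete first return to $q$, which is privileged by the induction hypothesis, so $p$ is privileged.

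For the converse, suppose $\Pri(w)=\Pal(w)$ and check condition (ii) of \autoref{thm:richness_characterization}. Let $z$ be a factor of $w$ that is a complete first return to a palindrome $p$; we may assume $|z|\ge 2$, since shorter words are palindromes. Then $p$ is a factor of $z$, hence of $w$, so $p\in\Pal(w)=\Pri(w)$, i.e.\ $p$ is privileged; moreover $|p|<|z|$ because the two occurrences of $p$ in $z$ are distinct. Therefore $z$ is a complete first return to a strictly shorter privileged word, so $z\in\Pri_A$; as $z$ is a factor of $w$, this gives $z\in\Pri(w)=\Pal(w)$, i.e.\ $z$ is a palindrome. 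Hence $w$ is rich.

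The only step carrying real content is the equality $r=p$ in the proof of $\Pal(w)\subseteq\Pri(w)$, which asserts that in a rich word every palindrome is a complete first return to its longest proper palindromic prefix; everything else is bookkeeping with occurrences plus the two trivial base cases. I expect the main obstacle to be getting the details around that step exactly right --- in particular the observation that, for a palindrome, the longest proper palindromic prefix coincides with the longest proper palindromic suffix, and that the complete-first-return factor $r$ built from the two rightmost occurrences of $q$ is genuinely longer than $q$.
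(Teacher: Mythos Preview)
Your proof is correct and follows essentially the same approach as the paper: both directions use \autoref{thm:richness_characterization}, both inclusions in the forward direction go by induction on length, and the key step in $\Pal(w)\subseteq\Pri(w)$ --- that a palindromic factor of a rich word is a complete first return to its longest proper palindromic prefix (equivalently, suffix) --- is the same in both arguments. The only cosmetic difference is that you phrase this step via the longest proper palindromic \emph{suffix} and the rightmost two occurrences of $q$, whereas the paper uses the longest proper palindromic \emph{prefix}; your version is in fact spelled out more carefully than the paper's.
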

  \begin{proof}
    ($\Longrightarrow$)
    Suppose that the word $w$ is rich. The claim is clear for factors $u$ of length $|u| \leq 1$. Assume first that $u$,
    $|u| > 1$, is privileged. By definition $u$ is a complete first return to a shorter privileged word $v$. By induction
    $v$ is a palindrome, and hence $u$ is a complete first return to a palindrome, and is by
    \autoref{thm:richness_characterization} itself a palindrome.

    Suppose then that $u$ is a palindrome. Let $v$ be the longest proper palindromic prefix of $u$. Now $u$ is a complete
    first return to $v$. Otherwise $u$ would have a proper prefix which is a complete first return to $u$, and by
    \autoref{thm:richness_characterization} this prefix would be a longer proper palindromic prefix of $u$ than $v$. By
    induction it follows that $v$ is privileged, and thus $u$ too is a privileged word.

    ($\Longleftarrow$)
    Suppose now that $\Pri(w) = \Pal(w)$. Now let $q$ be a complete first return to a palindrome $p$ in $w$. By assumption
    $p$ is privileged, and thus $q$ too is privileged. Again by assumption $q$ is a palindrome, and the claim follows from
    \autoref{thm:richness_characterization}.
  \end{proof}

  As noted in \cite{2011:a_characterization_of_subshifts_with_bounded_powers}, privileged words are a “maximal generalization” of
  palindromes in the sense that every word is rich in privileged words, as is seen in \autoref{cor:all_rich_privileged}.

  \section{Privileged Words and Complete Return Factors}\label{sec:return_factors}
  Privileged words are special kind of complete return words. In this section we will prove that every word $w$ has $|w|+1$ distinct
  privileged factors. We will also state a characterization of those words whose all complete return factors are privileged.
  This characterization has already been done in \cite{2012:words_with_the_smallest_number_of_closed_factors}, but it seems that
  the authors missed the concept of privileged words, so we will briefly show the connection between privileged words and their work.

  The authors of \cite{2012:words_with_the_smallest_number_of_closed_factors} called complete return factors
  \emph{closed factors}, but here we stick with more conventional vocabulary. In this section we count the empty word and the
  letters of the alphabet (as complete returns to the empty word) as complete return factors.

  \begin{lemma}\label{lem:at_least_one_complete_return_factor}
    Let $w \in A^+$. Then every position of $w$ introduces at least one complete return factor of $w$.
  \end{lemma}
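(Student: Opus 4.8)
The plan is, for a fixed position $i$ of $w$, to exhibit one factor ending at $i$ that is at once unioccurrent in the prefix $w[1,i]$ and a complete first return to some (possibly empty) word. The right object to look at is the shortest unioccurrent suffix of that prefix. So I would first let $u$ be the shortest suffix of $w[1,i]$ that is unioccurrent in $w[1,i]$; this exists because $w[1,i]$ itself is such a suffix. Put $\ell = |u| \geq 1$. If $\ell = 1$, then $u = w[i]$ is a letter not occurring in $w[1,i-1]$; position $i$ introduces it, and a letter counts as a complete return factor (a complete first return to $\varepsilon$), so in this case we are done.

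The real work is the case $\ell \geq 2$. Let $u'$ be the suffix of $w[1,i]$ of length $\ell - 1$. By the minimality of $u$, the word $u'$ is not unioccurrent in $w[1,i]$, so it occurs there at least twice; let $k$ be the largest position with $k < i$ at which an occurrence of $u'$ ends, and set $v = w[k - \ell + 2,\, i]$. Then $v$ begins and ends with an occurrence of $u'$, and by the maximality of $k$ no occurrence of $u'$ ends strictly between $k$ and $i$. A short accounting of the positions at which an occurrence of $u'$ contained in $v$ can end — they all lie in the interval $[k,i]$, and both endpoints are attained — then shows that $v$ has exactly two occurrences of $u'$, so $v$ is a complete first return to $u'$; being also a factor of $w$, it is a complete return factor of $w$.

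It remains to see that position $i$ introduces $v$, that is, that $v$ is unioccurrent in $w[1,i]$, and this is where the choice of $u$ is used: since $k < i$ the word $u$ is a suffix of $v$, so any occurrence of $v$ in $w[1,i]$ would force an occurrence of $u$ ending at the same position; as $u$ occurs only once in $w[1,i]$, namely ending at $i$, so does $v$. I expect the main point to be getting this interplay right: minimality of $u$ is what makes $u'$ repeat, yielding a genuine complete first return $v$, while unioccurrence of $u$, inherited by $v$ because $u$ is a suffix of $v$, is what makes $v$ actually introduced at $i$. The remaining verifications are index bookkeeping — in particular checking $k - \ell + 2 \geq 1$ and confirming that $v$ really does start and end with $u'$.
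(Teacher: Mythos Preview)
Your argument is correct, but it proceeds in the opposite direction from the paper's. The paper fixes the position $i$, takes $v$ to be the \emph{longest} complete return factor ending at $i$ (such a factor exists because single letters count), and then observes that $v$ must be unioccurrent in $w[1,i]$: a second occurrence of $v$ ending at some $j<i$ would make $w[j-|v|+1,i]$ a strictly longer complete first return (to $v$) ending at $i$, contradicting maximality. That is the whole proof.

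You instead start from the \emph{shortest} unioccurrent suffix $u$ of $w[1,i]$, use minimality to force the one-letter-shorter suffix $u'$ to repeat, and then build the complete first return to $u'$ ending at $i$; unioccurrence of the constructed word is inherited from $u$. Both routes are short and natural. The paper's maximality trick is a one-liner and later gets reused verbatim (with ``privileged'' in place of ``complete return'') to prove \autoref{cor:all_rich_privileged}; your construction, on the other hand, makes the introduced factor more explicit and highlights the role of the shortest unrepeated suffix.
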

  \begin{proof}
    Consider the position $i$ of the word $w$, and the longest complete return factor $v$ ending in $i$. Factor $v$ exists
    since letters are complete return factors. We prove that $v$ is unioccurrent in $w[1,i]$, which proves the claim. Now
    if $v$ had been introduced earlier, say at position $j$, then the factor $w[j - |v| + 1, i]$ would be a complete first
    return to $v$ contradicting the maximality of $v$.
  \end{proof}

  \begin{corollary}\emph{\cite{2012:words_with_the_smallest_number_of_closed_factors}}
    Every word $w$ has at least $|w|+1$ complete return factors. \qed
  \end{corollary}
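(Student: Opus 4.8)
The plan is to combine \autoref{lem:at_least_one_complete_return_factor} with a simple counting argument organised around the notion of a position \emph{introducing} a factor. First I would record that, by the convention adopted in this section, the empty word $\varepsilon$ counts as a complete return factor of $w$; this accounts for one factor. Then, for each of the $|w|$ positions $i$ of $w$, \autoref{lem:at_least_one_complete_return_factor} supplies a complete return factor $u_i$ that is introduced at position $i$, and $u_i$ is necessarily non-empty (the factor exhibited in the proof of that lemma is the longest complete return factor ending at $i$, and letters are complete return factors, so its length is at least $1$).

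The second step is to argue that $u_1, u_2, \ldots, u_{|w|}$ are pairwise distinct. The key observation is that a given factor can be introduced at most one position: if $u$ is introduced at position $i$, then $u$ is unioccurrent in $w[1,i]$, so its unique occurrence in $w[1,i]$ ends exactly at $i$, and hence the introducing position is determined by $u$ itself. Therefore $u_i = u_j$ forces $i = j$, so the $u_i$ are $|w|$ distinct non-empty complete return factors.

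Putting the two steps together, $w$ has at least $|w|$ distinct non-empty complete return factors, and together with $\varepsilon$ this gives at least $|w|+1$ complete return factors in total. I do not anticipate any genuine obstacle: the content is entirely in \autoref{lem:at_least_one_complete_return_factor}, and the only points requiring a little care are the bookkeeping of the empty word and the remark that ``introduces'' is a functional relation from factors to positions, which is what makes the $u_i$ distinct.
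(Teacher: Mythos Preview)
Your proposal is correct and matches the paper's approach: the paper gives no explicit proof, treating the corollary as an immediate consequence of \autoref{lem:at_least_one_complete_return_factor}, and what you have written is precisely the obvious counting argument that unpacks this (each of the $|w|$ positions introduces a new non-empty complete return factor, and together with $\varepsilon$ this makes $|w|+1$). Your care about the empty word and about ``introduces'' being functional is exactly the bookkeeping the paper leaves implicit.
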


  A word $w$ might have more than $|w|+1$ complete return factors (and most words do). Consider for instance the word $w = 1^k 01^k 0$.
  Every position in $w$ except the last introduces exactly one new complete return factor, but the last position introduces $k+1$
  new complete return factors, yielding a total of $3k+2 = |w| + \frac{1}{2}|w| - 1$ complete return factors in $w$.
  
  \begin{corollary}\label{cor:all_rich_privileged}
    Every word $w \in A^*$ has exactly $|w| + 1$ distinct privileged factors, i.e. every word is rich in privileged words.
  \end{corollary}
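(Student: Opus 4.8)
The plan is to strengthen \autoref{lem:at_least_one_complete_return_factor} by showing that every position of $w$ introduces \emph{exactly one} privileged factor. Since every nonempty privileged factor is introduced at exactly one position (namely the end of its first occurrence), this yields a bijection between $\{1,\dots,|w|\}$ and $\Pri(w)\setminus\{\varepsilon\}$, hence $|\Pri(w)| = |w|+1$; the case $w=\varepsilon$ being trivial.

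For the lower bound I would argue as in \autoref{lem:at_least_one_complete_return_factor}, but keeping track of privilegedness. Fix a position $i$ and let $v$ be the longest privileged factor ending at $i$, which exists since the letter $w[i]$ is privileged. I claim $v$ is unioccurrent in $w[1,i]$, i.e.\ position $i$ introduces $v$. If not, let $j<i$ be the last position at which an occurrence of $v$ ends inside $w[1,i]$. Then $w[j-|v|+1,i]$ begins and ends with $v$ and, by the choice of $j$, contains exactly these two occurrences of $v$, so it is a complete first return to the shorter privileged word $v$; by the defining recursion it is therefore privileged. But it ends at $i$ and has length $i-j+|v|>|v|$, contradicting the maximality of $v$.

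For the upper bound, suppose position $i$ introduced two distinct privileged factors $u$ and $v$ with $|u|<|v|$. Both end at $i$, so $u$ is a privileged suffix of $v$; by \autoref{lem:privileged_prefix_suffix} it is also a prefix of $v$. Since $v=w[i-|v|+1,i]$, this prefix occurrence of $u$ ends at position $i-|v|+|u|<i$, so $u$ occurs at two distinct positions of $w[1,i]$, contradicting that $u$ is unioccurrent in $w[1,i]$.

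Combining the two parts, the assignment of each position $i$ to the unique privileged factor it introduces is a well-defined function; it is injective (a factor introduced at $i$ is not introduced at any other position) and onto $\Pri(w)\setminus\{\varepsilon\}$ (a nonempty privileged factor is introduced at the end of its first occurrence), which proves the corollary. The one point needing care is in the lower bound: one must take the \emph{last} earlier occurrence of $v$ so that the resulting complete return factor is a complete \emph{first} return, after which privilegedness is immediate from the recursive definition; everything else is routine bookkeeping with positions.
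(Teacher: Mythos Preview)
Your proof is correct and follows essentially the same approach as the paper: the lower bound is obtained by adapting the proof of \autoref{lem:at_least_one_complete_return_factor} with ``longest privileged factor'' in place of ``longest complete return factor,'' and the upper bound comes from \autoref{lem:privileged_prefix_suffix} applied to two privileged factors introduced at the same position. Your version is in fact slightly more careful than the paper's, since you explicitly take the \emph{last} earlier occurrence of $v$ to guarantee a complete \emph{first} return, a point the paper glosses over.
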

  \begin{proof}
    If we replace “longest complete return factor” with “longest privileged factor” in the proof of
    \autoref{lem:at_least_one_complete_return_factor}, we obtain that every position of $w$ introduces at least one new
    privileged factor. Now if some position $i$ would introduce two privileged factors, say $u$ and $v$, with $|u| < |v|$,
    then by \autoref{lem:privileged_prefix_suffix} $u$ would also be a prefix of $v$, i.e. it wouldn't be unioccurrent in
    $w[1,i]$. This is a contradiction, and thus every position of $w$ introduces exactly one new privileged factor.
  \end{proof}

  From the proof we obtain the following facts:

  \begin{corollary}
    Let $w$ be a word. If some position in $w$ introduces exactly one complete return factor, then this factor is privileged.
    The word $w$ has exactly $|w|+1$ complete return factors if and only if its every complete return factor is privileged. \qed
  \end{corollary}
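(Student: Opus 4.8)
The plan is to read both assertions off from the two results just established, namely \autoref{lem:at_least_one_complete_return_factor} and \autoref{cor:all_rich_privileged} (together with the proof of the latter). The two facts I would use repeatedly are: (a) every privileged factor of $w$ is a complete return factor of $w$ --- for length at least $2$ this is exactly the definition of privileged, and for the single letters and the empty word it holds by the counting convention adopted in this section; and (b) a nonempty factor of $w$ is introduced at exactly one position, namely the end of its first occurrence in $w$, so that the number of complete return factors of $w$ equals $1$ (accounting for $\varepsilon$) plus the sum over the $|w|$ positions of the number of complete return factors introduced there.

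For the first assertion I would argue as follows. By the proof of \autoref{cor:all_rich_privileged}, a given position $i$ introduces exactly one privileged factor, say $u$. By (a), $u$ is a complete return factor, and it is introduced at position $i$, so $u$ occurs among the complete return factors introduced at $i$; by \autoref{lem:at_least_one_complete_return_factor} there is at least one of these. Hence if position $i$ happens to introduce exactly one complete return factor, that factor must coincide with $u$, and is therefore privileged.

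For the equivalence I would combine (a), (b), and the first assertion. By (b) the number of complete return factors of $w$ equals $1 + \sum_{i=1}^{|w|} c_i$, where $c_i \geq 1$ is the number of complete return factors introduced at position $i$ (the inequality being \autoref{lem:at_least_one_complete_return_factor}); thus this number is at least $|w|+1$, with equality precisely when $c_i = 1$ for every $i$. If every complete return factor of $w$ is privileged, then by (a) the set of complete return factors of $w$ equals $\Pri(w)$, which has exactly $|w|+1$ elements by \autoref{cor:all_rich_privileged}. Conversely, if $w$ has exactly $|w|+1$ complete return factors, then $c_i = 1$ for each $i$, so by the first assertion the unique complete return factor introduced at each position is privileged; since $\varepsilon$ is privileged by definition, every complete return factor of $w$ is then privileged.

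The whole argument is routine and I do not expect a genuine obstacle. The only points that require a little care are keeping the convention about the empty word and the single letters consistent throughout, so that the inclusion (a) really holds, and justifying (b) --- that each nonempty factor is introduced at a single position --- which is what makes the counting identity $1 + \sum_i c_i$ for the number of complete return factors valid.
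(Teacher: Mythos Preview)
Your argument is correct and is essentially what the paper intends: the corollary is stated with a bare \qed right after \autoref{cor:all_rich_privileged}, with the remark ``From the proof we obtain the following facts,'' so the paper's proof is exactly the unpacking you have written --- each position introduces a unique privileged factor, that factor is in particular a complete return factor, and the counting forces equality of the two sets when the total is $|w|+1$. The only thing the paper leaves implicit that you make explicit is the bookkeeping identity (b) and the inclusion (a), both of which are indeed needed and are correctly justified in your write-up.
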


  In the article \cite{2012:words_with_the_smallest_number_of_closed_factors} words having the minimum number of complete
  return factors were considered. The authors called such words \emph{C-poor words}. Since the minimum number of complete return
  factors in a word $w$ is $|w|+1$, we have that a word is C-poor if and only if its every complete return factor is privileged.
  We are ready to state a characterization of C-poor words.

  \begin{proposition}\emph{\cite{2012:words_with_the_smallest_number_of_closed_factors}}
    Let $w$ be a word. Then the following are equivalent:
    \begin{enumerate}[(i)]
      \item $w$ is C-poor,
      \item every complete return factor of $w$ is privileged,
      \item $w$ doesn't contain as a factor a complete first return to $xy$ for distinct letters $x$ and $y$. \qed
    \end{enumerate}
  \end{proposition}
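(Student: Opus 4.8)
The plan is to take the equivalence (i)$\,\Leftrightarrow\,$(ii) as already established — a word is C-poor exactly when it attains the minimum number $|w|+1$ of complete return factors, which by the preceding corollary happens exactly when every complete return factor of $w$ is privileged — and to prove (ii)$\,\Leftrightarrow\,$(iii). A preliminary observation I will record first: if $q$ is a complete first return to $u$ and $|q|\geq 2$, then $u$ is the longest proper border of $q$, so a complete return word of length at least $2$ is a complete first return to a \emph{unique} word. Indeed $u$ is certainly a border of $q$; if $q$ had a strictly longer border $u'$, then $u$, being simultaneously a prefix and a suffix of the prefix $u'$ of $q$, would occur a third time in $q$, contradicting that $q$ has exactly two occurrences of $u$.

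For (ii)$\,\Rightarrow\,$(iii) I argue by contradiction: suppose some factor $q$ of $w$ is a complete first return to $xy$ with $x\neq y$. Then $q$ is a complete return factor, so (ii) makes $q$ privileged; as $|q|\geq 2$, $q$ is then a complete first return to some shorter privileged word, and by the uniqueness above that word must be $xy$. But $xy$ with $x\neq y$ is not privileged (the only privileged words of length two are of the form $aa$), a contradiction. (If one prefers to avoid the uniqueness remark, one can instead invoke \autoref{lem:privileged_prefix_suffix} to see that the shorter privileged word to which $q$ is a complete first return is a border of $q$, and then rule out every possible length for it by hand.)

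For (iii)$\,\Rightarrow\,$(ii) I prove, by strong induction on $|q|$, that every complete return factor $q$ of $w$ is privileged. The cases $|q|\leq 1$ are immediate from the definition. For $|q|\geq 2$ let $u$ be the longest proper border of $q$, so that $q$ is a complete first return to $u$ and $1\leq|u|<|q|$. If $|u|=1$ then $u$ is a letter, hence privileged, and so is $q$. If $|u|\geq 2$, the crux is to show that $u$ is itself a complete return factor of $w$; once this is known, $u$ is privileged by the induction hypothesis, and therefore $q$, being a complete first return to the privileged word $u$, is privileged. Suppose towards a contradiction that the factor $u$ of $w$ is not a complete return word. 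Put $a=q[1]=u[1]$ and let $a^m$ be the maximal prefix of $q$ consisting of $a$'s. If $|u|\leq m$ then $u=a^{|u|}$, which \emph{is} a complete return word (a complete first return to $a^{|u|-1}$) — contradiction; hence $|u|\geq m+1$, and likewise $q$ is not a power of $a$. Thus $q$, and with it its prefix $u$ of length $\geq m+1$, begins with $a^m b$ where $b=q[m+1]\neq a$. Since $u$ is also a suffix of $q$, the word $a^m b$ occurs in $q$ starting at position $|q|-|u|+1\geq 2$ as well as at position $1$; reading off the factor $ab$ carried inside each of these two occurrences of $a^m b$, we find $ab$ at two distinct positions of $q$, so $q$ — and hence $w$ — contains a complete first return to $ab$ with $a\neq b$, contradicting (iii). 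This shows $u$ is a complete return factor and completes the induction.

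The main obstacle is this last inductive step, specifically the case in which $q$ begins with a repeated letter $aa$: condition (iii) only forbids first returns to pairs of \emph{distinct} letters, so it gives no leverage at the very start of $q$. The way around it is the run-length trick above: one follows the maximal initial run $a^m$ of $q$ to its first change of letter $a^m b$, which does contain a forbidden pair $ab$, and the fact that $u$ is a border of $q$ forces this configuration to reappear, producing the required complete first return.
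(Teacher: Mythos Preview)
The paper does not supply its own proof of this proposition: it is stated with a citation to \cite{2012:words_with_the_smallest_number_of_closed_factors} and closed immediately with a \qed. So there is no argument on the paper's side to compare yours against; the only thing the surrounding text contributes is the equivalence (i)$\Leftrightarrow$(ii), via the corollary just before the proposition, which you correctly invoke.

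Your proof is correct. The preliminary observation---that for a complete return word $q$ with $|q|\geq 2$ the word it returns to is precisely its longest proper border---is the right lever, and both (ii)$\Rightarrow$(iii) and (iii)$\Rightarrow$(ii) go through as written. In (iii)$\Rightarrow$(ii) the run-length trick (passing to the first letter change $a^m b$ inside $q$) is exactly what is needed to extract a forbidden pair $ab$ from the border $u$.

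One minor remark: in your inductive step for (iii)$\Rightarrow$(ii), the induction hypothesis is never actually used. Your case split shows that, assuming (iii), the case $|u|\geq m+1$ is impossible; hence the longest proper border $u$ of \emph{every} complete return factor $q$ of $w$ with $|q|\geq 2$ is a power $a^{|u|}$ of a single letter. Powers of a letter are privileged directly, so $q$ is privileged without appeal to any shorter instance. The argument therefore simplifies to a direct proof rather than an induction, though what you wrote is of course still valid.
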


  It's worth noting that by part \emph{(iii)} C-poor words avoid all squares except squares of letters.
  If the word $w$ is binary, then it can be said more:

  \begin{proposition}\emph{\cite{2012:words_with_the_smallest_number_of_closed_factors}}
  Let $w$ be a binary word. Then the following are equivalent:
  \begin{enumerate}[(i)]
    \item $w$ is C-poor,
    \item every complete return factor of $w$ is a palindrome,
    \item $w$ is a conjugate of a word in $0^*1^*$. \qed
  \end{enumerate}
  \end{proposition}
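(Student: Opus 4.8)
The plan is to prove the cycle of implications (i) $\Rightarrow$ (iii) $\Rightarrow$ (ii) $\Rightarrow$ (i), leaning on the preceding characterization of C-poor words, which in the binary case asserts that $w$ is C-poor precisely when $w$ contains, as a factor, no complete first return to $01$ and no complete first return to $10$.

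For (i) $\Rightarrow$ (iii) I would analyze the run structure of $w$. Writing $w = a_1^{e_1} a_2^{e_2} \cdots a_k^{e_k}$ as its factorization into maximal blocks ($a_j \neq a_{j+1}$, $e_j \geq 1$), I claim $k \leq 3$. Indeed, if $k \geq 4$, take four consecutive blocks; over a binary alphabet they alternate, say (after possibly applying the exchange operation) $0^{e_j} 1^{e_{j+1}} 0^{e_{j+2}} 1^{e_{j+3}}$, and then the factor $0\, 1^{e_{j+1}}\, 0^{e_{j+2}}\, 1$ of $w$ --- one letter from each outer block and all of the two inner ones --- has exactly two occurrences of $01$, one at each end, hence is a complete first return to $01$, contradicting C-poorness. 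With $k \leq 2$ the word lies in $0^*1^* \cup 1^*0^*$; with $k = 3$ the two outer blocks carry the same letter, so $w$ equals $0^{e_1}1^{e_2}0^{e_3}$ or $1^{e_1}0^{e_2}1^{e_3}$. In every one of these cases a suitable conjugate of $w$ lies in $0^*1^*$, which is (iii).

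For (iii) $\Rightarrow$ (ii), let $w$ be a conjugate of $0^a1^b$; the single-letter case being trivial, assume $a,b\ge 1$. Read cyclically, $w$ is $0^a1^b$, so $01$ occurs at most once in $w$ and $10$ occurs at most once in $w$. Hence any factor of $w$ occurring at least twice must be unary --- otherwise it contains $01$ or $10$, which would then also occur at least twice --- so every complete return factor of $w$ has a unary base. A complete return factor with base $\varepsilon$ is a single letter. One with base $0^m$, $m\ge 1$: its two framing occurrences of $0^m$ either lie inside the same maximal $0$-block, forcing the factor to be $0^{m+1}$, or lie in two distinct maximal $0$-blocks, which in a conjugate of $0^a1^b$ are separated by a single block $1^c$, forcing the factor to be $0^m1^c0^m$; both are palindromes. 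The case of base $1^m$ is symmetric, which gives (ii).

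Finally, (ii) $\Rightarrow$ (i): a complete first return to $01$ is a word $q$ beginning and ending with $01$ with exactly two, hence non-overlapping, occurrences of $01$, so $|q|\ge 4$; were $q$ a palindrome, $q_1q_2 = 01$ would force $q_{|q|-1}q_{|q|} = 10$, contradicting that $q$ ends with $01$. So $w$ has no complete first return to $01$, and symmetrically none to $10$; the preceding characterization then yields that $w$ is C-poor. The step that takes the most care is the bookkeeping in (iii) $\Rightarrow$ (ii) --- identifying exactly which complete first returns a unary factor $0^m$ can have when $w$ carries one or two maximal $0$-blocks --- but this becomes short once the "at most one $01$, at most one $10$" observation is in place; the remaining implications are essentially immediate.
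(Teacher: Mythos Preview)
Your argument is correct. Note, however, that the paper does not supply its own proof of this proposition: it is quoted from the cited work of Fici and Lipt\'ak and closed with a \textsc{qed} symbol inside the statement, so there is no in-paper argument to compare yours against. Your cycle (i)$\Rightarrow$(iii)$\Rightarrow$(ii)$\Rightarrow$(i), built on the immediately preceding characterization of C-poor words via the absence of complete first returns to $xy$ with $x\neq y$, is a clean and self-contained route; the run-length bound $k\le 3$ for (i)$\Rightarrow$(iii) and the ``at most one occurrence of $01$ and of $10$'' observation driving (iii)$\Rightarrow$(ii) are exactly the right tools, and the bookkeeping you flag in (iii)$\Rightarrow$(ii) is handled correctly.
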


  Now part \emph{(ii)} of the above Proposition actually says that for a binary C-poor word $w$, $\Pri(w) \subseteq \Pal(w)$.
  We obtain that $\Pri(w) = \Pal(w)$, since $|w|+1 = |\Pri(w)| \leq |\Pal(w)| \leq |w|+1$. Thus by
  \autoref{prp:rich_privileged=palindrome} the word $w$ must be rich (as was observed in
  \cite{2012:words_with_the_smallest_number_of_closed_factors}). If the alphabet is larger than two letters, then not
  every C-poor word is rich: for instance the word $0120$ is C-poor, but not rich.

  \section{Privileged Complexity, Sturmian Words and Episturmian Words}\label{sec:sturmian_words}
  In the study of infinite words many different so-called complexity functions have been considered. It's clearly
  of interest to try to count the number of distinct privileged words of length $n$ occurring in a finite or infinite word
  $w$, that is, to figure out the privileged complexity of words.

  \begin{definition}
    Let $w$ be a finite or infinite word. The \emph{privileged complexity function} which counts the number of distinct
    privileged factors of length $n$ in $w$ is defined as
    \begin{align*}
      \A_n(w) = |\Pri_n(w)|
    \end{align*}
    for all $n \geq 0$.
  \end{definition}

  \subsection{Sturmian Words}
  In this section we prove some basic results about the privileged complexity function function, and a characterization
  of Sturmian words using privileged complexity. First we need to discuss some related complexity functions.

  \emph{The factor complexity function} $\C_n(w)$ of the word $w$ counts the number of distinct factors of $w$ of length $n$,
  i.e. $\C_n(w) = |\Fac_n(w)|$. We state the following well-known Theorem (for a proof see Theorem 1.3.13 of
  \cite{2002:algebraic_combinatorics_on_words}).

  \begin{theorem}\label{thm:aperiodic_classical}
    An infinite word $w$ is aperiodic if and only if $\C_n(w) \geq n+1$ for all $n \geq 0$, i.e $w$ is aperiodic if and
    only if it has at least one right special factor of each length. \qed
  \end{theorem}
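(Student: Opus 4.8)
The plan is to route everything through the first difference of the factor complexity. For a factor $u$ of $w$ write $d^+(u)$ for the number of letters $a$ with $ua \in \Fac(w)$. I would first record the identity $\C_{n+1}(w) - \C_n(w) = \sum_{u \in \Fac_n(w)} \bigl(d^+(u) - 1\bigr)$, which holds because each factor of length $n+1$ has a unique prefix of length $n$, so $\Fac_{n+1}(w)$ is partitioned by this prefix and the block of $u$ has size $d^+(u)$. Since $w$ is infinite, every factor extends to the right inside $w$, so $d^+(u) \geq 1$ for all $u$; hence $\C_n(w)$ is nondecreasing, and $\C_{n+1}(w) = \C_n(w)$ exactly when no factor of length $n$ is right special. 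Next I would show that one such equality propagates: if no factor of length $n$ is right special then every factor of length $n$ has a unique right extension, whence every factor of length $n+1$ has a unique right extension too (its admissible right extensions are governed by its length-$n$ suffix), and by induction no factor of length $\geq n$ is right special, so $\C_m(w) = \C_n(w)$ for all $m \geq n$. Combining these observations with $\C_0(w) = 1$: the complexity is either strictly increasing at every step or eventually constant, and ``$\C_n(w) \geq n+1$ for all $n$'' is equivalent to the former and hence to ``$w$ has a right special factor of every length'', which settles the equivalence asserted in the ``i.e.'' clause.

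It then remains to prove that $\C_n(w)$ is eventually constant if and only if $w$ is ultimately periodic. For the easy direction, if $w = uv^\omega$ with $p = |v|$, then a factor of length $n$ either begins among the first $|u|$ positions or lies inside the periodic tail, where it is determined by its starting position modulo $p$; hence $\C_n(w) \leq |u| + p$ for all $n$, a bounded nondecreasing sequence, which is eventually constant (and in fact $\C_n(w) \leq n$ once $n \geq |u| + p$, so $w$ is then not aperiodic). For the converse, suppose $\C_m(w) = k$ for all $m \geq n_0$ and fix such an $m$; every factor of length $m$ has a unique right extension, so ``replace a length-$m$ factor by the length-$m$ suffix of its unique right extension'' defines a map $f$ on the $k$-element set $\Fac_m(w)$. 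Reading $w$ from left to right traces an orbit of $f$, which, living in a finite set, is eventually periodic with some period $q$; comparing first letters of consecutive factors along the orbit gives $w[i] = w[i+q]$ for all large $i$, so $w$ is ultimately periodic. Together with the previous paragraph this proves the theorem.

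The delicate step is this last converse implication: one must extract honest periodicity of the letter sequence from the mere stabilization of a counting function. The clean device is the functional graph (Rauzy graph) on $\Fac_m(w)$ in which every vertex has out-degree one, together with the elementary fact that every orbit in a finite functional graph is eventually periodic. Everything else reduces to the counting identity above and induction on length.
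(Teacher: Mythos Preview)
Your argument is correct and is essentially the standard proof of this classical Morse--Hedlund result. Note, however, that the paper does not supply its own proof of this theorem: it is stated as a well-known fact with a \qed and a pointer to Theorem~1.3.13 of \cite{2002:algebraic_combinatorics_on_words}, so there is no in-paper proof to compare against. Your write-up matches the usual textbook treatment: the first-difference identity $\C_{n+1}(w)-\C_n(w)=\sum_{u\in\Fac_n(w)}(d^+(u)-1)$, the propagation step showing that one stagnation forces eventual constancy, and the finite functional-graph (Rauzy graph) argument extracting ultimate periodicity from bounded complexity. Nothing is missing.
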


  \emph{Sturmian words} are characterized by the fact that they are the simplest infinite aperiodic words in terms of
  the complexity function. They are defined as follows:

  \begin{definition}
    An infinite word $w$ is \emph{Sturmian} if $\C_n(w) = n+1$ for all $n \geq 0$.
  \end{definition}

  For more information about Sturmian words see the Chapter 2 of \cite{2002:algebraic_combinatorics_on_words}.
  The next two Propositions are well-known (for proofs see Propositions 2.1.2 and 2.1.3 of
  \cite{2002:algebraic_combinatorics_on_words}). 

  \begin{proposition}\label{prp:balanced_upper_bound}
    Let $X$ be a factorial set of words. If $X$ is balanced, then $|X \cap \{0,1\}^n| \leq n+1$ for all $n \geq 0$. \qed
  \end{proposition}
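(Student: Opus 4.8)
The plan is to argue by induction on $n$, with the heart of the matter being the claim that a factorial balanced binary set contains \emph{at most one} right special factor of each length (where, for a set $X$, we call $u \in X$ right special if both $u0$ and $u1$ lie in $X$).

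For $n \le 1$ the bound is immediate, since $X \cap \{0,1\}^n$ has at most one element (the empty word) when $n = 0$ and at most two when $n = 1$. For the inductive step, assume $|X \cap \{0,1\}^n| \le n+1$. Every word of length $n+1$ in $X$ has its length-$n$ prefix in $X$ by factoriality, so the map $w \mapsto w[1,n]$ sends $X \cap \{0,1\}^{n+1}$ into $X \cap \{0,1\}^n$; the fibre over a word $p$ is $\{p0,p1\} \cap X$, which has two elements precisely when $p$ is right special in $X$ and at most one otherwise. Hence $|X \cap \{0,1\}^{n+1}| \le |X \cap \{0,1\}^n| + r_n$, where $r_n$ is the number of right special factors of length $n$ in $X$, and it suffices to show $r_n \le 1$.

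Suppose to the contrary that $u \ne v$ are both right special factors of length $n$ in $X$. Let $z$ be their longest common suffix; since $u \ne v$ and $|u| = |v|$, $z$ is a proper suffix of each, so we may write $u = u'az$ and $v = v'bz$ with letters $a \ne b$, say $a = 0$ and $b = 1$. Because $u$ is right special, $u0 \in X$, and by factoriality its suffix $0z0$ lies in $X$; likewise $1z1 \in X$. But $0z0$ and $1z1$ have the same length $|z|+2$, while $|0z0|_1 = |z|_1$ and $|1z1|_1 = |z|_1 + 2$, so they violate the balance condition for length $|z|+2$ — a contradiction. Therefore $r_n \le 1$, and combining with the displayed inequality gives $|X \cap \{0,1\}^{n+1}| \le (n+1) + 1 = n+2$, completing the induction.

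The only delicate point is the longest-common-suffix step in the last paragraph: one has to be sure that, because $u$ and $v$ have equal length and are distinct, the letters immediately preceding their longest common suffix genuinely exist and differ, which is exactly what converts ``two right special factors'' into the balance-forbidden pair $0z0,\ 1z1$. Everything else is bookkeeping; in particular the degenerate cases ($X$ empty, $X \cap \{0,1\}^n$ empty, or $|z| = n-1$) cause no trouble.
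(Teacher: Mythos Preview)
Your proof is correct and is precisely the standard argument. The paper itself does not supply a proof of this proposition but refers to Proposition~2.1.2 of \cite{2002:algebraic_combinatorics_on_words}, where exactly this induction-on-$n$ via the bound on right special factors is carried out; in fact the paper reproduces your key step (at most one right special factor per length, via the longest common suffix yielding an unbalanced pair $aza,\hat{a}z\hat{a}$) verbatim in the first paragraph of the proof of \autoref{lem:bal_return_words}.
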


  \begin{proposition}\label{prp:unbalanced_pair}
    Let $X$ be a factorial set of words. If $X$ is unbalanced, then there exists a unique minimal unbalanced pair of the
    form $(0x0, 1x1)$ in $X$ where the word $x$ is a palindrome. \qed
  \end{proposition}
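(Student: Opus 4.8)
The plan is to obtain the pair by dissecting a shortest unbalanced pair in $X$. Call a pair $(u,v)$ with $u,v\in X$, $|u|=|v|$ and $||u|_1-|v|_1|\ge 2$ an \emph{unbalanced pair of length $|u|$}. Since $X$ is not balanced, such pairs exist, so there is a least length $n$ at which one occurs. Fix this $n$ and choose $u,v\in X\cap\{0,1\}^n$ with $|u|_1$ as large and $|v|_1$ as small as possible; then $|u|_1-|v|_1\ge 2$, and by the choice of $n$ any two words of $X$ of length $<n$ have $1$-counts differing by at most one.

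First I would pin down the shapes of $u$ and $v$ by a peeling argument. Since $X$ is factorial, the words $u[1,n-1]$, $u[2,n]$, $v[1,n-1]$, $v[2,n]$ all lie in $X$ and have length $n-1<n$. Writing out the $1$-counts, $u[1,n-1]$ and $v[1,n-1]$ differ in $1$-count by $(|u|_1-|v|_1)-(u_n-v_n)$, which is at least $1$ yet at most $1$ in absolute value, hence equal to $1$; so $u$ ends with $1$, $v$ ends with $0$, and $|u|_1-|v|_1=2$. The same comparison applied to $u[2,n]$ and $v[2,n]$ shows $u$ begins with $1$ and $v$ begins with $0$. Thus $u=1x1$ and $v=0y0$ for words $x,y$ with $|x|=|y|=n-2$, and counting $1$'s gives $|x|_1=|y|_1$. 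In particular $1x,x1\in X$ (as factors of $u$) and $0y,y0\in X$ (as factors of $v$).

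The crux — and, I expect, the main obstacle — is to show that $x=y$ and that $x$ is a palindrome. For $x=y$: assuming $x\ne y$, since they have equal length and equal $1$-count, at the first position $i$ where they differ, say $x_i=1$ and $y_i=0$, the running counts of $1$'s must recross, so there is $j>i$ with $x_j=0$, $y_j=1$ and $|x[i,j]|_1=|y[i,j]|_1$; and because no unbalanced pair of length $<n$ exists, this running difference never reaches $2$. The factors $x[i,j]$ and $y[i,j]$ then have equal length and equal $1$-count, begin and end with opposite letters, and lie in $X$; playing this off against $1x,x1,0y,y0\in X$ and the balancedness of $X$ at all smaller lengths should expose a bispecial factor whose two symmetric one-letter-on-each-side extensions both lie in $X$ and are strictly shorter than $u,v$, contradicting the minimality of $n$ — so $x=y$. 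Once $x=y$ we have $0x0,1x1\in X$, hence $0x,1x,x0,x1\in X$ and $x$ is bispecial; if $x$ were not a palindrome I would cut $x$ at its first disagreement with its reversal and run the same mechanism (the complexity bound $|X\cap\{0,1\}^m|\le m+1$ for $m<n$ from \autoref{prp:balanced_upper_bound} helps keep the special factors under control) to manufacture a strictly shorter pair of that symmetric form, again contradicting minimality. Uniqueness follows likewise: two distinct palindromes $x\ne x'$ of length $n-2$ with $0x0,1x1,0x'0,1x'1\in X$ would — using that the extremal choice of $u,v$ forces $|x|_1=|x'|_1$, and the symmetry coming from palindromicity at the first position where $x$ and $x'$ differ — again produce a bispecial factor of length $<n-2$ with both symmetric extensions in $X$, contradicting the minimality of $n$. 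The bookkeeping in the ``expose a bispecial factor'' steps is where I expect the real work to be.
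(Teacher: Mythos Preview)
The paper does not actually prove this proposition: it is quoted with a \qed and the reader is referred to Lothaire (Propositions~2.1.2 and~2.1.3 of \emph{Algebraic Combinatorics on Words}). So there is no in-paper proof to compare against; what follows compares your outline to the standard (Lothaire) argument.

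Your peeling step is correct and is exactly how the classical proof starts: from a shortest unbalanced pair one gets $u=1x1$, $v=0y0$ with $|x|_1=|y|_1$.

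Where your plan diverges---and where there is a genuine gap---is in the ``expose a bispecial factor'' mechanism for $x=y$ and for the palindromicity of $x$. First, in your $x=y$ step you write ``say $x_i=1$ and $y_i=0$'' at the first difference; but that is the \emph{trivial} case: then $1x[1,i]$ and $0y[1,i]$ already have $1$-counts differing by~$2$, a shorter unbalanced pair. The case that needs work is $x_i=0$, $y_i=1$, and your recrossing/bispecial sketch does not produce, from $x[i,j]$ and $y[i,j]$, a word $z$ with both $0z0$ and $1z1$ in $X$. The classical route avoids bispecial factors entirely: set $S_k=|x[1,k]|_1-|y[1,k]|_1$; balancedness of the prefixes $1x[1,k],\,0y[1,k]\in X$ gives $S_k\in\{-2,-1,0\}$, and balancedness of the suffixes $x[k,n{-}2]1,\,y[k,n{-}2]0\in X$ gives $S_{k-1}\in\{0,1,2\}$ (using $|x|_1=|y|_1$). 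Hence $S_k=0$ for all $k$, so $x=y$.

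Second, for ``$x$ is a palindrome'' your idea of cutting $x$ at its first disagreement with $\mirror{x}$ tacitly uses $\mirror{x}$, but $X$ is only assumed factorial, not closed under reversal, and the appeal to the complexity bound does not repair this. The missing idea---and the crux of Lothaire's proof---is a \emph{prefix-versus-suffix} comparison inside $X$ itself: for each $k$, the words $0x[1,k]$ and $x[n{-}1{-}k,n{-}2]1$ (a prefix of $0x0$ and a suffix of $1x1$) lie in $X$ and have length $k{+}1<n$, so their $1$-counts differ by at most~$1$; likewise for $1x[1,k]$ and $x[n{-}1{-}k,n{-}2]0$. These two inequalities force $|x[1,k]|_1=|x[n{-}1{-}k,n{-}2]|_1$ for every $k$, which is exactly the statement that $x$ is a palindrome. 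The same cross-comparison, applied to two putative minimal pairs $(0x0,1x1)$ and $(0x'0,1x'1)$, yields uniqueness without any special-factor bookkeeping.
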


  Sturmian words are also characterized as follows (see Theorem 2.1.5 of \cite{2002:algebraic_combinatorics_on_words}):

  \begin{theorem}
    An infinite binary word is Sturmian if and only if it's aperiodic and balanced. \qed
  \end{theorem}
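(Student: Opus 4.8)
The plan is to prove the two implications separately. The reverse implication is a short deduction from the two Propositions just stated, so almost all of the work is in the forward implication, and within it, in the balance condition.

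\textbf{Aperiodic and balanced $\Rightarrow$ Sturmian.} Suppose $w$ is aperiodic and balanced. Then $\Fac(w)$ is a balanced factorial set, so \autoref{prp:balanced_upper_bound} gives $\C_n(w) = |\Fac_n(w)| \le n+1$ for all $n \ge 0$. On the other hand, since $w$ is aperiodic, \autoref{thm:aperiodic_classical} gives $\C_n(w) \ge n+1$ for all $n \ge 0$. Hence $\C_n(w) = n+1$ for every $n$, i.e.\ $w$ is Sturmian.

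\textbf{Sturmian $\Rightarrow$ aperiodic and balanced.} Assume $\C_n(w) = n+1$ for all $n$. Aperiodicity is immediate from \autoref{thm:aperiodic_classical}. For balance I would argue by contradiction: suppose $\Fac(w)$ is unbalanced and let $(0x0,1x1)$ be the unbalanced pair provided by \autoref{prp:unbalanced_pair}, with $x$ a palindrome; write $n = |x|$. Two elementary facts drive the argument. First, because $w$ is binary, each factor of length $m$ has one or two right extensions that are factors, so the number of right special factors of length $m$ equals $\C_{m+1}(w) - \C_m(w) = 1$. Second, the length-$m$ suffix of any right special factor of length $m+1$ is again right special. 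Now $x$ is right special ($x0$ and $x1$ are factors, being suffixes of $0x0$ and $1x1$), hence $x$ is \emph{the} right special factor of length $n$, and therefore every right special factor of length $n+1$ has suffix $x$, i.e.\ lies in $\{0x,1x\}$, both of which are factors. Let $e$ be the number of two-sided extensions $axb$ ($a,b\in\{0,1\}$) of $x$ that are factors of $w$; then $e \ge 2$ (we have $0x0,1x1$), and $0x$ (resp.\ $1x$) is right special exactly when $0x1$ (resp.\ $1x0$) is also a factor, so the number of right special factors of length $n+1$ equals $e-2$. By the first fact this forces $e = 3$. But since $x$ is a palindrome, $\widetilde{0x1} = 1x0$, and the factor set of a Sturmian word is closed under reversal, so $0x1$ is a factor if and only if $1x0$ is; hence $e \in \{2,4\}$. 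This contradiction shows $\Fac(w)$ is balanced.

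The step I expect to be the main obstacle is supplying the input that the factor set of a Sturmian word is closed under reversal, and making sure it is available without circularity. It is: this property holds for \emph{all} Sturmian words independently of balance -- for instance because the characteristic Sturmian word with the same factor set is a limit of palindromic prefixes, so the reversal of any factor is again a factor (see \cite{2002:algebraic_combinatorics_on_words}). If one prefers to avoid invoking reversal-closure, the alternative is to exclude the case $e = 3$ directly: assuming, say, that $0x1$ is a factor but $1x0$ is not, one propagates the resulting constraints on right and left special factors to length $n+2$ and derives a contradiction with $\C_{n+3}(w) - \C_{n+2}(w) = 1$. This works but is considerably more laborious, so I would present the reversal-closure route as the main proof.
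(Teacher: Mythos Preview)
The paper does not prove this theorem at all: it is stated with a \qed and a pointer to Theorem~2.1.5 of \cite{2002:algebraic_combinatorics_on_words}. So there is no ``paper's own proof'' to match; you are supplying one where the paper simply cites the literature.

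Your reverse implication is correct and is exactly the standard two-line deduction from \autoref{prp:balanced_upper_bound} and \autoref{thm:aperiodic_classical}. For the forward implication, the counting argument up to ``$e=3$'' is clean and correct. The weak point is precisely the one you flag: invoking closure of $\Fac(w)$ under reversal. In the standard development (including the one in \cite{2002:algebraic_combinatorics_on_words} that the paper cites), reversal closure is obtained \emph{after} the balance/mechanical characterization---the fact that every Sturmian word shares its language with a characteristic word, and that characteristic words are limits of palindromic standard words, is set up using Theorem~2.1.5 itself. So the justification you propose for the non-circularity of this step is not actually independent of what you are trying to prove; as written, the main route is circular relative to the source you are relying on.

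Your fallback---ruling out $e=3$ by pushing the special-factor analysis two more steps---is essentially the classical Coven--Hedlund/Lothaire argument and is the one that genuinely avoids circularity. If you want a self-contained proof here, that is the version to write out; the reversal-closure shortcut should be dropped unless you can point to (or give) a proof of reversal closure that uses only $\C_n(w)=n+1$ and nothing downstream of balance.
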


  Sturmian words have numerous other characterizations. The characterization of interest here is the characterization
  in terms of the palindromic complexity function, due to X. Droubay and G. Pirillo \cite{1999:palindromes_and_sturmian_words}.
  \emph{The palindromic complexity function} $\PP_n(w)$ is defined as $\PP_n(w) = |\Pal_n(w)|$, it counts the
  number of distinct palindromes of length $n$ in $w$.

  \begin{theorem}\label{thm:sturmian_pal_characterization}\emph{\cite{1999:palindromes_and_sturmian_words}}
    An infinite word $w$ is Sturmian if and only if it has palindromic complexity
    \begin{align*}
      \PP_n(w) = 
        \begin{cases}
        1, & \text{if $n$ is even} \\
        2, & \text{if $n$ is odd}
        \end{cases}
    \end{align*}
    for all $n \geq 0$. \qed
  \end{theorem}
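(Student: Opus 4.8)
The plan is to treat the two implications of the equivalence separately; the forward direction is where the palindromic analogue of the privileged‑word machinery of this paper is useful, while the converse is closer to classical Sturmian combinatorics and its last step is the real obstacle.

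For the direction ``Sturmian $\Rightarrow$ the formula'' assume $w$ is Sturmian, so that $w$ is binary, aperiodic, balanced and $\C_n(w)=n+1$ for all $n$. I would first record the upper bounds $\PP_n(w)\le 1$ for $n$ even and $\PP_n(w)\le 2$ for $n$ odd by a shortest‑counterexample argument: if two distinct palindromes shared the same even length, take the shortest such pair; comparing them letter by letter, the palindrome symmetry of both words forces their first mismatch to occur already at the first (hence also the last) position, so the pair has the form $\{0x0,1x1\}$ with $x$ a palindrome — an unbalanced pair, contradicting the balance of $w$. Peeling off a maximal common prefix together with its mirror suffix reduces, in the same spirit, any three distinct palindromes of a common odd length either to three distinct palindromes of a strictly smaller odd length or again to a forbidden pair $\{0x0,1x1\}$; in both cases we reach a contradiction, so the upper bounds hold.

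For the matching lower bounds $\PP_n(w)\ge 1$ for all $n$ and $\PP_n(w)\ge 2$ for $n$ odd, I would go through richness. By \autoref{thm:richness_characterization} it suffices to show that every complete first return to a palindrome $p$ in $w$ is itself a palindrome. Since the set of factors of a Sturmian word is closed under reversal, the reversal of a complete first return $q$ to $p$ is again a complete first return to $p$ of length $|q|$; and a Sturmian factor has exactly two complete first returns, which have distinct lengths because $w$ is aperiodic (equal lengths would force $p$ to occur periodically). Hence $q=\mirror{q}$, so $w$ is rich, and the classical identity $\PP_n(w)+\PP_{n+1}(w)=\C_{n+1}(w)-\C_n(w)+2$ for rich words, together with $\C_{n+1}(w)-\C_n(w)=1$, gives $\PP_n(w)+\PP_{n+1}(w)=3$ for all $n$; since $\PP_0(w)=1$ this forces the stated values by induction (and in fact subsumes the upper‑bound argument above). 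As the introduction anticipates, the richness of Sturmian words is also obtainable directly from the privileged‑word methods used for the paper's main theorem.

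For the converse, assume the displayed palindromic complexity. Then $\PP_1(w)=2$ forces a two‑letter alphabet. Next, the general inequality $\C_{n+1}(w)-\C_n(w)\ge \PP_n(w)+\PP_{n+1}(w)-2=1$, valid for every infinite word, together with \autoref{thm:aperiodic_classical} shows that $w$ is aperiodic; it then remains only to show that $w$ is balanced, for then $w$ is Sturmian. If $w$ were unbalanced, \autoref{prp:unbalanced_pair} would give a minimal unbalanced pair $(0x0,1x1)$ with $x$ a palindrome, so $0x0$ and $1x1$ would be two distinct palindromes of length $|x|+2$; if $|x|$ is even this contradicts $\PP_{|x|+2}(w)=1$. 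The remaining case, $|x|$ odd, is the delicate one: there $\{0x0,1x1\}$ must be all of $\Pal_{|x|+2}(w)$, and a purely numerical argument no longer suffices. One instead uses that minimality rules out the shorter pair $(00,11)$, forcing $11\notin\Fac(w)$ and $x$ to begin and end with $0$, and then tracks the ensuing rigid structure of the $0$‑runs of $w$ (a very long run of $0$'s coexisting with a block $1\,0^{k}\,1$ of incompatible length) until it clashes with the existence of a palindrome of every length — or, failing a clean self‑contained form of this step, one simply invokes the original argument of Droubay and Pirillo \cite{1999:palindromes_and_sturmian_words} for this implication. The hard part is precisely this last step: excluding an unbalanced word whose minimal unbalanced pair has odd core $x$, a configuration that the palindrome counts alone cannot detect. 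On the forward side the only genuinely nonroutine ingredient is the richness of Sturmian words, established via \autoref{thm:richness_characterization} as above; everything else is bookkeeping.
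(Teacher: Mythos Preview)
Your forward direction is fine and in fact goes beyond the paper, which simply cites \cite{1999:palindromes_and_sturmian_words} for ``Sturmian $\Rightarrow \PP_{Pal}$'' and does not reprove it. Your richness argument (reversal of a complete first return to a palindrome is again one, plus the two return words of a Sturmian factor have distinct lengths) is a clean way to get \autoref{thm:richness_characterization} to fire, and the identity $\PP_n+\PP_{n+1}=\C_{n+1}-\C_n+2$ for rich words with reversal-closed language finishes it.

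The converse, however, has a genuine gap. The ``general inequality'' $\C_{n+1}(w)-\C_n(w)\ge \PP_n(w)+\PP_{n+1}(w)-2$ that you invoke to get aperiodicity is \emph{not} valid for arbitrary infinite words; it is the Baláži--Masáková--Pelantová bound, which needs the language to be closed under reversal, and that is precisely one of the things you have not yet established. (A concrete failure: for $w=0110^\omega$ one has $\C_4-\C_3=0$ but $\PP_3+\PP_4-2=1$.) The paper avoids this by proving aperiodicity abstractly via the $Q$-property machinery (\autoref{cor:pal_aperiodic}, from \autoref{lem:q_periodic}): any ultimately periodic word has $\HH^Q_n=0$ infinitely often or eventually $\HH^Q_n=1$, neither of which matches $\PP_{Pal}$.

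Your handling of balance is also incomplete exactly where it matters. You correctly isolate the hard case ($|x|$ odd, so $\{0x0,1x1\}$ already fills $\Pal_{|x|+2}(w)$), but then you sketch a run-length argument that you yourself flag as possibly not self-contained, and fall back on citing Droubay--Pirillo. The paper's approach is to carry an inductive invariant $\Spe_n\wedge\Bal_n\wedge\Rev_n$ (this is \autoref{lem:characterization_2} with Case~1 dropped and Case~2 lightly adjusted): at the critical step one knows $\Pal_{n-1}(w)=\{x,t\}$ with $t\neq x$, and from $\Spe_n$ the factor $t$ is not right special, while $\Rev_n$ turns $ta\in\Fac_n(w)$ into $at\in\Fac_n(w)$; hence $ata\in\Pal_{n+1}(w)$ is a \emph{third} palindrome alongside $0x0,1x1$, contradicting $\PP_{n+1}(w)=2$. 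That single extra palindrome $ata$ is the missing idea in your odd-core case, and it only becomes available because $\Rev$ and $\Spe$ are being maintained inductively rather than deduced at the end.
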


  We say that the word $w$ has the property $\PP_{Pal}(w)$ if the word $w$ satisfies the palindromic complexity
  of the above Theorem. We shall prove that the condition of this Theorem is sufficient after
  we have established a (similar) proof of \autoref{thm:sturmian_pri_characterization}.

  It's natural to ask what is the privileged complexity of Sturmian words, and if the answer
  to this question characterizes Sturmian words. This indeed is the case, and it's the main result of this section.

  \begin{theorem}\label{thm:sturmian_pri_characterization}
    An infinite word $w$ is Sturmian if and only if it has privileged complexity
    \begin{align*}
      \A_n(w) = 
        \begin{cases}
        1, & \text{if $n$ is even} \\
        2, & \text{if $n$ is odd}
        \end{cases}
    \end{align*}
    for all $n \geq 0$.
  \end{theorem}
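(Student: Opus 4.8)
The plan is to prove both directions largely from first principles, so that the richness of Sturmian words and one direction of the Droubay--Pirillo palindromic complexity formula fall out as by-products.

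For the forward implication, assume $w$ is Sturmian, i.e.\ aperiodic and balanced. First I would show that $\Fac(w)$ is closed under reversal: the set $\Fac(w) \cup \{\mirror{u} : u \in \Fac(w)\}$ is factorial and balanced, hence by \autoref{prp:balanced_upper_bound} it has at most $n+1$ elements of length $n$, which together with $\C_n(w) = n+1$ forces it to equal $\Fac(w)$. Next I would prove that $w$ is rich by verifying condition (ii) of \autoref{thm:richness_characterization}. Here the key input is the return word structure of Sturmian words: every nonempty factor has exactly two complete first returns and these have distinct lengths. Given a palindrome $p$ and a complete first return $u$ to $p$ in $w$, the word $\mirror{u}$ lies in $\Fac(w)$ by reversal closure and is again a complete first return to $\mirror{p}=p$; since the two complete first returns of $p$ have different lengths and $|\mirror{u}|=|u|$, we must have $\mirror{u}=u$, so $u$ is a palindrome. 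Thus $w$ is rich, and $\Pri(w)=\Pal(w)$ by \autoref{prp:rich_privileged=palindrome}, so $\A_n(w)=\PP_n(w)$ for all $n$. It then remains to count palindromes. Aperiodicity forces a bispecial factor of each length to exist and, by reversal closure, to be a palindrome, so that its central factors witness $\PP_n(w)\ge 1$ for all $n$; combining $\PP_0(w)=1$ with the parity constraint coming from the balance bound (two palindromes of even length, or three of odd length, over-count $\Fac_n(w)$) and from $\C_{n+1}(w)-\C_n(w)=1$ then pins down $\PP_n(w)=1$ for even $n$ and $2$ for odd $n$, which is the Droubay--Pirillo direction.

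For the converse, assume $\A_n(w)=1$ for even $n$ and $\A_n(w)=2$ for odd $n$. I would first rule out that $w$ is ultimately periodic: $\A_1(w)=2$ forces both letters to occur and $\A_2(w)=1$ forces, after possibly exchanging letters, $00\in\Fac(w)$ and $11\notin\Fac(w)$; for a word of this shape one checks that if it were periodic its privileged factors would have lengths omitting infinitely many values, contradicting $\A_n(w)\ge 1$ for every $n$ --- equivalently, one shows $\C_n(w)\ge n+1$ and applies \autoref{thm:aperiodic_classical}. It then suffices to show that $\Fac(w)$ is balanced, for then $w$ is aperiodic and balanced, hence Sturmian. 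Suppose it is not balanced; by \autoref{prp:unbalanced_pair} there is a minimal unbalanced pair $(0x0,1x1)$ with $x$ a palindrome, both $0x0$ and $1x1$ being factors of $w$. Minimality forces $x$ to be bispecial in $w$, and from this one argues that $0x0$ and $1x1$, or suitable complete first returns constructed from them, are privileged factors of $w$; tracking their lengths against the prescribed values $1$ (even) and $2$ (odd) then yields a contradiction.

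The hard part is this last step: turning the combinatorial description of the minimal unbalanced pair into an over- or under-count of privileged factors at a specific length. The difficulty is that one must locate enough privileged (equivalently complete-return) factors in the vicinity of $0x0$ and $1x1$ and control how they are distributed by length, and that bookkeeping is what carries the proof. A lesser point requiring attention in the forward direction is the use of the return word structure of Sturmian words (two complete first returns per factor, of distinct lengths); if a self-contained treatment is wanted, this must be deduced from balance via \autoref{prp:balanced_upper_bound} and \autoref{prp:unbalanced_pair} rather than cited.
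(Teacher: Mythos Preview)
Your forward direction is sound and genuinely different from the paper's. The paper does not invoke the return word structure of Sturmian words; instead it proves directly, by induction on length, that balance up to length $n$ forces $\Pri_m(w)=\Pal_m(w)$ for all $m\le n+1$ (\autoref{prp:balanced_rich}), analysing complete first returns to $u=au'a$ and using \autoref{lem:bal_return_words} to control the returns to $u'$. Your route via ``two complete first returns of distinct lengths, hence $\mirror{u}=u$'' is shorter and conceptually cleaner, at the cost of importing (or rederiving) Vuillon's theorem on return words. One small slip: bispecial factors of a Sturmian word do not exist at every length; what you need is that they are arbitrarily long and palindromic, so their central factors witness $\PP_n(w)\ge1$.

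The converse is where your plan has a real gap. Suppose $(0x0,1x1)$ is the minimal unbalanced pair and set $n+1=|0x0|$. Even granting that $0x0$ and $1x1$ are privileged, if $n+1$ is odd this only gives $\A_{n+1}(w)\ge2$, which is exactly the prescribed value---no contradiction. The paper resolves this by running a \emph{simultaneous} induction on four properties $\Rch_n$, $\Spe_n$, $\Bal_n$, $\Rev_n$: minimality of the pair gives $\Bal_m$ for $m\le n$, hence $\Rch_{n+1}$ via \autoref{prp:balanced_rich}, so $0x0,1x1\in\Pri_{n+1}(w)$ and $n+1$ is odd; then $\Rch_{n-1}$ and $\A_{n-1}(w)=2$ produce a second palindrome $t\ne x$ of length $n-1$; $\Spe_n$ forces $t$ not to be right special, and $\Rev_n$ turns the unique extension $ta$ into $at$ and thence $ata\in\Pal_{n+1}(w)=\Pri_{n+1}(w)$, giving three privileged factors of length $n+1$. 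Your sketch never builds the reversal-closure and unique-right-special information at level $n$ that is needed to manufacture this third witness, and without it the argument cannot close. This is precisely the ``bookkeeping'' you flag as hard, and it is not merely bookkeeping: the extra palindrome $ata$ is the whole point.

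Your aperiodicity step is also underspecified. The paper handles it abstractly: it shows that for any ``$Q$-property'' (palindromes, privileged words, \ldots), an ultimately periodic word has either infinitely many vanishing $Q$-complexity values or eventually constant $Q$-complexity equal to~$1$ (\autoref{lem:q_periodic}), so $\JSP(w)$ immediately rules out ultimate periodicity. Your ad hoc argument via $00\in\Fac(w)$, $11\notin\Fac(w)$ may be salvageable, but as written it does not obviously exclude an eventually periodic tail, and in any case you will need aperiodicity (indeed $\C_m(w)=m+1$ for $m\le n+1$) inside the induction to establish $\Rev_{n+1}$, as the paper does in Case~4 of \autoref{lem:characterization_2}.
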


  The proof of this theorem is based on two Lemmas \ref{lem:characterization_2} and \ref{lem:characterization_1}.
  To simplify notations, we say that the word $w$ has the property $\JSP(w)$ if the word $w$ satisfies the privileged
  complexity of the above Theorem.

  We will first prove that an infinite word $w$ having the property $\JSP(w)$ must be Sturmian. For this purpose we
  introduce the concepts of $Q$-property and $Q$-factors of words.

  \begin{definition}
    A \emph{$Q$-property} of words is defined to satisfy the following conditions:
    \begin{itemize}
      \item[-] $Q(\varepsilon)$ and $Q(a)$ hold for all letters $a \in A$,
      \item[-] for every position $i$ in every word there exists a factor with property $Q$ ending at position $i$,
      \item[-] every position in every word introduces at most one factor with property $Q$.
    \end{itemize}
    Factors with property $Q$ are called \emph{$Q$-factors}.
  \end{definition}

  \begin{definition}
    The \emph{$Q$-complexity function} of a word $w$ is defined as
    \begin{align*}
      \HH^Q_n(w) = |\{u \in \Fac_n(w) : Q(u) \}|.
    \end{align*}
    For a finite word $w$ we define $\HH^Q(w) = |\{u \in \Fac(w) : Q(u)\}|$.
  \end{definition}

  The following Lemma follows easily from the definition.

  \begin{lemma}\label{lem:q_upper_bound}
    Every word has at most $|w| + 1$ distinct $Q$-factors, i.e. $\HH^Q(w) \leq |w| + 1$. \qed
  \end{lemma}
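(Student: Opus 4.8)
The plan is to package directly the third defining condition of a $Q$-property, namely that every position introduces at most one $Q$-factor, into a counting argument of the same shape as the proof of \autoref{cor:all_rich_privileged}.

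First I would recall that every non-empty factor $u$ occurring in $w$ is introduced at exactly one position, namely the right endpoint of its leftmost occurrence; denote this position by $\iota(u)$. Writing $|w| = n$, the map $\iota$ sends the set of non-empty $Q$-factors of $w$ into $\{1, \dots, n\}$. Next I would check that $\iota$ is injective on $Q$-factors: if $u \neq v$ were non-empty $Q$-factors with $\iota(u) = \iota(v) = i$, then position $i$ would introduce two distinct $Q$-factors, contradicting the third condition in the definition of a $Q$-property. Hence the number of non-empty $Q$-factors of $w$ is at most $n = |w|$.

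Finally, since $\varepsilon$ is a $Q$-factor by the first condition of the definition and is not among the non-empty ones just counted, adding it back gives $\HH^Q(w) \leq |w| + 1$, as claimed.

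There is essentially no obstacle here: the statement is a direct transcription of the definition, and the only point requiring a little care is that the empty word has to be accounted for separately, since the notion of a position "introducing" a factor is what makes the map $\iota$ well-defined and is stated for (non-empty) factors.
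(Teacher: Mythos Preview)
Your argument is correct and is exactly the kind of elaboration the paper has in mind: the paper gives no explicit proof beyond remarking that the lemma ``follows easily from the definition,'' and your injection-of-introduction-positions argument is precisely that unwinding. The only subtlety you rightly flag---handling $\varepsilon$ separately---is indeed the one point requiring care.
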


  It's well-known and easy to see that every position in every word introduces at most one new palindrome (see Proposition
  2 of \cite{2001:episturmian_words_and_some_constructions_of_de}). Hence “being a palindrome” is a $Q$-property. From the
  proof of \autoref{cor:all_rich_privileged} we see that every position in every word introduces exactly one new privileged
  factor, and thus “being a privileged word” is a $Q$-property. Third possible $Q$-property could be “being a power of a
  letter”.

  \begin{lemma}
    Let $w \in A^+$ be a finite word with $|\Alphabet(w)| \geq 2$. Then $\HH^Q_n(w) = 0$ for some $2 \leq n \leq |w|$.
  \end{lemma}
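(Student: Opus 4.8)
The idea is a simple counting argument based on \autoref{lem:q_upper_bound}. Suppose, for contradiction, that $\HH^Q_n(w) \geq 1$ for every $n$ with $2 \leq n \leq |w|$. We then add up the number of $Q$-factors of each length and compare with the bound $\HH^Q(w) \leq |w| + 1$.

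First I would pin down the small lengths. Since $Q(\varepsilon)$ holds and $\varepsilon$ is a factor of $w$, we get $\HH^Q_0(w) = 1$. Since $Q(a)$ holds for every letter $a \in A$, the length-$1$ $Q$-factors of $w$ are exactly the letters occurring in $w$, so $\HH^Q_1(w) = |\Alphabet(w)| \geq 2$ by hypothesis. Combining this with the contradiction hypothesis $\HH^Q_n(w) \geq 1$ for each of the $|w| - 1$ values $n = 2, 3, \dots, |w|$, and noting that $\HH^Q(w) = \sum_{n=0}^{|w|} \HH^Q_n(w)$, we obtain
\begin{align*}
  \HH^Q(w) \;\geq\; 1 + 2 + (|w| - 1) \;=\; |w| + 2,
\end{align*}
which contradicts \autoref{lem:q_upper_bound}. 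Hence $\HH^Q_n(w) = 0$ for some $n$ in the range $2 \leq n \leq |w|$.

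There is essentially no obstacle here: the only thing to be careful about is correctly accounting for the two "free" $Q$-factors of length $0$ and (at least) two of length $1$, which together with the assumed one per length in $[2, |w|]$ overshoot the bound $|w| + 1$ by exactly one. The hypothesis $|\Alphabet(w)| \geq 2$ is precisely what supplies that extra unit.
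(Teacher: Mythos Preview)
Your proof is correct and is essentially the same counting argument as the paper's: both use that $\HH^Q(w) \leq |w|+1$ together with $\HH^Q_0(w)=1$ and $\HH^Q_1(w)\geq 2$ to force a zero among the lengths $2,\dots,|w|$. Your presentation is in fact slightly more streamlined, since you sum the lower bounds directly instead of first reducing to the case $\HH^Q(w)=|w|+1$ and then arguing about positions introducing $Q$-factors of distinct lengths.
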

  \begin{proof}
    We may assume that $w$ has exactly $|w| + 1$ distinct $Q$-factors, since otherwise
    clearly $\HH^Q_n(w) = 0$ for some $2 \leq n \leq |w|$. Now if $\HH^Q_n(w) > 0$ for all $1 \leq n \leq |w|$, then
    since we have $|w|$ positions in $w$, every position needs to introduce a distinct $Q$-factor of different length.
    This however is impossible since by assumption $|\Alphabet(w)| \geq 2$ positions introduce a $Q$-factor of length one.
  \end{proof}

  \begin{definition}
    If $\HH^Q_n(w) = 0$ for some integer $n$, then we say that $n$ is a \emph{vanishing index of $w$}.
  \end{definition}

  When it's said that $n$ is a vanishing index in the last $y$-block of $y^m$ in the next Lemma, we mean that
  $n$ is a vanishing index of $y^m$, and that $(m-1)|y| < n \leq m|y|$.

  \begin{lemma}\label{lem:q_periodic}
    Let $w = y^\omega$ be a periodic infinite word. Then either $\HH^Q_n(w) = 0$ for infinitely many $n$ or there exists
    such $k$ that $\HH^Q_n(w) = 1$ for all $n \geq k$.
  \end{lemma}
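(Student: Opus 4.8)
The plan is to analyze how $Q$-factors behave inside a periodic word $w = y^\omega$ by looking at one period at a time. Fix the period $p = |y|$. For each $m \geq 1$ consider the finite prefix $y^m$ of $w$. The key observation is that the quantity $\HH^Q(y^m)$ — the total number of distinct $Q$-factors of $y^m$ — is nondecreasing in $m$, since $\Fac(y^m) \subseteq \Fac(y^{m+1})$, and by \autoref{lem:q_upper_bound} applied within $w$ it is bounded: every $Q$-factor of $w$ of length $\leq mp$ that actually occurs in $w$ already occurs in $y^{m+1}$ (a factor of length $\ell$ of a periodic word with period $p$ occurs within any window of length $\ell + p$), so $\HH^Q(y^m)$ stabilizes to some finite value $N$ equal to the total number of distinct $Q$-factors occurring in $w$. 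Once it has stabilized, say for all $m \geq m_0$, passing from $y^m$ to $y^{m+1}$ adds $p$ new positions but no new $Q$-factors, so those $p$ positions introduce no new $Q$-factor at all.

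Now I would split into two cases according to whether there are finitely or infinitely many lengths $n$ with $\HH^Q_n(w) \geq 1$. If there are only finitely many such $n$, then trivially $\HH^Q_n(w) = 0$ for infinitely many $n$, which is the first alternative. So assume $\HH^Q_n(w) \geq 1$ for arbitrarily large $n$; I claim then that $\HH^Q_n(w) = 1$ for all sufficiently large $n$, and moreover that $\HH^Q_n(w) \geq 1$ for all $n$ beyond some point, giving the second alternative. For the lower bound: if $u$ is a $Q$-factor of $w$ with $|u| = n$, then since $w$ is periodic with period $p$ the word $uy'$ — where $y'$ is the appropriate rotation making $uy'$ a factor again of the same "cyclic type" — is not automatically a $Q$-factor, so instead I argue as follows. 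Take $m$ large with $\HH^Q(y^m) = N$ stabilized; the last $p$ positions of $y^{m+1}$ introduce no new $Q$-factor, meaning: for each such position $i$, the unique $Q$-factor ending at $i$ (which exists by the $Q$-property axioms) already occurred earlier. By the pigeonhole/introduction argument (as in the proof of \autoref{cor:all_rich_privileged}), each position introduces exactly one $Q$-factor when the count is maximal, but here the count is not maximal — it is $N < mp + 1$ eventually — so the relevant statement is just that every position has a (not necessarily new) $Q$-factor ending there, and among all positions of $w$ only finitely many distinct $Q$-factors arise. By periodicity the set of $Q$-factors ending at position $i$ depends only on $i \bmod p$ once $i$ is large, so there are at most $p$ "eventual" $Q$-factors, call them $u_1, \dots, u_r$ with $r \leq p$, occurring cofinally; every length $n$ large enough is realized by the $Q$-factor ending at positions $\equiv$ (something) $\bmod p$, but this only shows $\HH^Q_n(w) \geq 1$ for $n$ in certain residue classes — which is not yet "all large $n$".

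To get $\HH^Q_n(w) = 1$ rather than just $\leq p$, and for all large $n$ rather than a residue class, the honest route is: let $n_1 < n_2 < \cdots$ be the lengths with $\HH^Q_{n_j}(w) \geq 1$, and suppose this sequence is infinite. Pick a $Q$-factor $u_j$ of length $n_j$ for each $j$. Since $w$ is periodic with period $p$, for $n_j > p$ the factor $u_j$ is determined up to the relations of the cyclic word, and in particular two $Q$-factors of the same length $n > p$ must be rotations of each other compatible with $w$; but $w$ being a single orbit $y^\omega$ has at most... here I need: a periodic word $y^\omega$ has, for each $n$, at most $p$ factors of length $n$, and I must show at most one of them is a $Q$-factor for large $n$. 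This is where I would use the third axiom of the $Q$-property ("every position introduces at most one $Q$-factor") together with the stabilization: once $\HH^Q(y^m) = N$, in the window $y^{m+1}$ the $p$ fresh positions introduce $0$ new $Q$-factors, hence (since each position still has $\geq 1$ $Q$-factor ending at it) those $Q$-factors are "old", and chaining this across $m, m+1, m+2, \dots$ forces the multiset of $Q$-factors-by-ending-position to be eventually $p$-periodic with all members drawn from a fixed set of size $\leq N$. The main obstacle — and the step I expect to take real work — is precisely upgrading "$\HH^Q_n(w) \leq p$ for large $n$ and $\geq 1$ on a cofinal set" to the clean dichotomy "$=0$ infinitely often, or $=1$ for all large $n$"; I would handle it by showing that if $\HH^Q_n(w) \geq 1$ for two lengths $n$ and $n+1$ both exceeding $p$, then the $Q$-factor of length $n+1$ has the $Q$-factor of length $n$ as a suffix (using \autoref{lem:q_upper_bound}-type counting inside a single period to pin down introductions), and hence the set of "good" lengths is a final segment $\{n \geq k\}$, on which a one-period shift argument gives uniqueness. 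If instead the good lengths never form a final segment, they must skip infinitely many values, yielding the first alternative.
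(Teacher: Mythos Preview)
Your proposal has a genuine gap at its foundation. You claim that $\HH^Q(y^m)$ --- the total number of distinct $Q$-factors of $y^m$ --- is bounded and therefore stabilizes to some finite $N$, after which ``passing from $y^m$ to $y^{m+1}$ adds $p$ new positions but no new $Q$-factors.'' This is false. The bound from \autoref{lem:q_upper_bound} is $\HH^Q(y^m) \leq mp+1$, which grows with $m$; there is no uniform bound. Concretely, take $w = (01)^\omega$ and let $Q$ be ``palindrome'': the palindromic factors are $\varepsilon, 0, 1, 010, 101, 01010, \ldots$, so $\HH^Q(y^m) \to \infty$ and new $Q$-factors appear at every stage. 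Indeed, whenever the lemma's second alternative holds (one $Q$-factor of every large length), the total count is necessarily infinite, so any argument premised on its finiteness cannot work. Everything you build on this stabilization --- the ``eventual $Q$-factors $u_1,\dots,u_r$'', the periodicity of $Q$-factors by ending position --- collapses with it.

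You are candid that the real obstacle is upgrading ``$\geq 1$ on a cofinal set of lengths'' to ``$=1$ on a final segment'', and you propose to show that a $Q$-factor of length $n+1$ has the $Q$-factor of length $n$ as a suffix; but nothing in the $Q$-axioms forces nesting across different lengths (that holds for privileged words via \autoref{lem:privileged_prefix_suffix}, not for arbitrary $Q$). The paper's argument avoids both pitfalls by tracking a genuinely bounded quantity: not the total number of $Q$-factors, but the number of \emph{vanishing indices} (lengths with no $Q$-factor) lying in the last $y$-block of $y^m$. This count is bounded by $r=|y|$ and, once past the last global vanishing index, is nondecreasing in $m$; hence it stabilizes. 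At that point each new block must introduce exactly $r$ $Q$-factors of pairwise distinct lengths, and any length with $\HH^Q_n(w)\geq 2$ would force fewer than $r$ distinct new lengths in some block, a contradiction.
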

  \begin{proof}
    Let $r = |y|$. If there are infinitely many vanishing indices, then the claim is clear. Assume that there are only
    finitely many vanishing indices. Let $m$ be such an integer that the last vanishing index $n$ is in the last $y$-block
    of $y^m$. If no such integer $n$ exists, we set $m=0$ and $n=-1$. Now concatenating $y$ to $y^m$ introduces at most $r$
    new $Q$-factors of different length since every position can introduce at most one new $Q$-factor. Now there might be
    some vanishing indices in the last $y$-block of $y^{m+1}$. However when the next $y$ is concatenated to $y^{m+1}$ it
    must be that $\HH^Q_i(y^{m+2}) > 0$ for all $n+1 \leq i \leq (m+1)r$ since if $\HH^Q_j(y^{m+2}) = 0$ for some
    $n+1 \leq j \leq (m+1)r$, then since adding more $y$'s to the end of $y^{m+2}$ doesn't introduce any new factors of
    length $j$, it would be that $\HH^Q_j(w) = 0$ contradicting the maximality of $n$.

    Now if there are $s$ vanishing indices in the last $y$-block of $y^{m+1}$, then there are at least $s$ vanishing indices
    in the last $y$-block of $y^{m+2}$. Otherwise concatenating $y$ to $y^{m+1}$ would have introduced more than $r$
    $Q$-factors of different length which is impossible. It could be that the number of vanishing indices in the last
    $y$-block of $y^{m+2}$ increases, but such a phenomenom can occur at most $r$ times. Hence there exists an integer $m'$
    such that the last $y$-block of $y^{m'+d}$ has $s'$ vanishing indices for all $d \geq 0$. We claim that
    $\HH^Q_i(w) = 1$ for all $i > m'r$, which proves the claim. Clearly by the maximality of $n$ $\HH^Q_i(w) \geq 1$ for
    all $i > m'r$. Now concatenating $y$ to $y^{m''}$ for $m'' \geq m'$ must introduce $r$ new $Q$-factors of different
    length, since otherwise the number of vanishing indices in the last $y$-block of $y^{m''+1}$ would increase. Now if
    $\HH^Q_j(w) \geq 2$ for some $m'r < j$, then concatenating $y$ to $y^{m''}$ for some $m'' \geq m'$ would introduce at
    least two new $Q$-factors of the same length or it would introduce at least one $Q$-factor of some length already
    introduced. This is impossible, since such a concatenation would introduce less than $r$ $Q$-factors of different length.
  \end{proof}

  \begin{corollary}
    Let $w$ be an ultimately periodic infinite word. Then either $\HH^Q_n(w) = 0$ for infinitely many $n$ or there exists
    such $k$ that $\HH^Q_n(w) = 1$ for all $n \geq k$.
  \end{corollary}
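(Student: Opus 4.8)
The plan is to derive the statement from a counting argument in the same spirit as the proof of \autoref{lem:q_periodic}, exploiting the fact that in an ultimately periodic word every factor of a given length already occurs inside a prefix of bounded length. Write $w = uv^\omega$ with $p = |u|$ and $r = |v| \geq 1$. The first step is to prove that
\begin{align*}
  \HH^Q_n(w) = \HH^Q_n\big(w[1,p+n+r]\big) \qquad \text{for every } n \geq 0 .
\end{align*}
The inequality ``$\geq$'' is trivial, since factors of a prefix of $w$ are factors of $w$ carrying the same intrinsic property $Q$; for ``$\leq$'' one argues that a factor of $w$ of length $n$ occurring at a position $i$ either satisfies $i \leq p$, in which case it already lies inside $w[1,p+n-1]$, or occurs entirely inside the periodic tail $v^\omega$, in which case periodicity lets one relocate its occurrence so that it starts at a position at most $p+r$ of $w$, hence ends by position $p+r+n-1 < p+n+r$.

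The second step is to feed this into \autoref{lem:q_upper_bound}. Applying that lemma to the prefix $w[1,M]$ with $M = p+n+r$ gives $\sum_{j=0}^{M} \HH^Q_j(w[1,M]) = \HH^Q(w[1,M]) \leq M+1$. By the first step $\HH^Q_j(w[1,M]) = \HH^Q_j(w)$ for all $j \leq n$, and the remaining terms are nonnegative, so discarding them yields the key estimate
\begin{align*}
  \sum_{j=0}^{n} \HH^Q_j(w) \leq p + n + r + 1 \qquad \text{for all } n \geq 0 .
\end{align*}

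For the final step, assume we are not in the alternative ``$\HH^Q_n(w) = 0$ for infinitely many $n$''. Then there is $n_1$ with $\HH^Q_n(w) \geq 1$ for all $n \geq n_1$. Subtracting $1$ from each such term and invoking the key estimate shows that $\sum_{j \geq n_1}\big(\HH^Q_j(w) - 1\big)$ is a series of nonnegative terms bounded by the constant $p + r + n_1 - \sum_{j=0}^{n_1-1}\HH^Q_j(w)$; hence only finitely many terms are nonzero, and together with $\HH^Q_j(w) \geq 1$ for $j \geq n_1$ this forces $\HH^Q_n(w) = 1$ for all sufficiently large $n$, which is exactly the second alternative. (One could instead run the dichotomy of \autoref{lem:q_periodic} on $v^\omega$ and then control the at most $p$ ``boundary'' factors of each length, but that route still needs essentially this counting to rule out infinitely many boundary $Q$-factors, so the direct argument seems preferable.)

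I expect the only genuine obstacle to be the bookkeeping in the first step: one must argue carefully, using the shape $w = uv^\omega$, that every length-$n$ factor recurs within the prefix of length $p+n+r$ — this is precisely where ultimate periodicity is used, and the statement is false for aperiodic words — and then verify that the truncation incurred in passing from $\HH^Q(w[1,M])$ to $\sum_{j \leq n}\HH^Q_j(w)$ is the bounded quantity claimed.
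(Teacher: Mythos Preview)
Your proof is correct and takes a genuinely different route. The paper's two-line argument applies \autoref{lem:q_periodic} to the tail $y^\omega$ and then asserts that prepending the finite word $x$ adds only finitely many new $Q$-factors, so that the dichotomy persists. You instead bypass \autoref{lem:q_periodic} entirely: from \autoref{lem:q_upper_bound} together with the fact that every length-$n$ factor of $w$ already occurs in $w[1,p+n+r]$ you obtain $\sum_{j\le n}\HH^Q_j(w)\le p+r+n+1$, and the dichotomy drops out by elementary summation. This is shorter --- it reproves \autoref{lem:q_periodic} as the special case $p=0$, without the block-by-block bookkeeping there --- and it is also more robust. The paper's finiteness assertion, while valid for privileged words, palindromes, and powers of a letter, can fail for an abstract $Q$-property: for instance $Q=\{\varepsilon,0,1\}\cup\{10^k:k\ge 2\}$ over $\{0,1\}$ satisfies all three axioms of a $Q$-property, yet $w=1\cdot 0^\omega$ has infinitely many $Q$-factors outside $\Fac(0^\omega)$, and here \autoref{lem:q_periodic} places $0^\omega$ in the ``infinitely many zeros'' branch while $w$ lands in the ``eventually $1$'' branch. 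So your suspicion in the final parenthetical is well founded, and your direct counting is what actually carries the Corollary at the stated level of generality.
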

  \begin{proof}
    Let $w = xy^\omega$. Adding the prefix $x$ to $y^\omega$ introduces only finitely many new $Q$-factors. Hence the
    claim follows from \autoref{lem:q_periodic}.
  \end{proof}

  As a consequence of the above Corollary and the discussion after \autoref{lem:q_upper_bound} we have the following
  two Corollaries.

  \begin{corollary}\label{cor:jsp_aperiodic}
    An infinite word $w$ having the property $\JSP(w)$ is aperiodic. \qed
  \end{corollary}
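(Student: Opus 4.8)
The plan is a short proof by contradiction that simply invokes the Corollary immediately preceding the statement. First I would recall, from the discussion following \autoref{lem:q_upper_bound}, that ``being a privileged word'' is a $Q$-property; taking $Q$ to be exactly this property, we have $\HH^Q_n(w) = |\Pri_n(w)| = \A_n(w)$ for all $n \geq 0$, so the $Q$-complexity of $w$ for this choice of $Q$ is literally the privileged complexity.

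Now suppose, for contradiction, that $w$ is not aperiodic. By definition $w$ is then ultimately periodic, so the preceding Corollary on ultimately periodic words applies and leaves two alternatives. If $\HH^Q_n(w) = 0$ for infinitely many $n$, this contradicts $\JSP(w)$, which forces $\A_n(w) \in \{1,2\}$, hence $\A_n(w) \geq 1$, for every $n$. Otherwise there is a $k$ with $\HH^Q_n(w) = 1$ for all $n \geq k$; choosing any odd $n \geq k$, of which there are infinitely many, gives $\A_n(w) = 1$, again contradicting $\JSP(w)$, which demands $\A_n(w) = 2$ for odd $n$. Both alternatives being impossible, $w$ must be aperiodic.

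I do not expect any real obstacle: all the content was already packed into \autoref{lem:q_periodic} and its Corollary, and the only thing left to check is that privileged factors indeed satisfy the axioms of a $Q$-property, which was established when privileged complexity was introduced via \autoref{cor:all_rich_privileged}. The proof is essentially a two-line specialization of the previous Corollary, together with the observation that the prescribed privileged complexity is everywhere positive and equals $2$ infinitely often.
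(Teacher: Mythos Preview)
Your proposal is correct and is exactly the argument the paper intends: the corollary is stated with a \qed and is presented as an immediate consequence of the preceding Corollary on ultimately periodic words together with the observation (after \autoref{lem:q_upper_bound}) that ``being privileged'' is a $Q$-property. Your two-case contradiction is precisely the unpacking of that one-line deduction.
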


  \begin{corollary}\label{cor:pal_aperiodic}
    An infinite word $w$ having the property $\PP_{Pal}(w)$ is aperiodic. \qed
  \end{corollary}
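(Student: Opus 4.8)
The plan is to obtain this exactly as \autoref{cor:jsp_aperiodic} was obtained, by invoking the Corollary on ultimately periodic words established just above (the one asserting that for ultimately periodic $w$, either $\HH^Q_n(w) = 0$ for infinitely many $n$ or $\HH^Q_n(w) = 1$ for all large $n$) with the $Q$-property taken to be ``being a palindrome''. So the first thing I would do is recall, as noted in the discussion after \autoref{lem:q_upper_bound}, that ``being a palindrome'' really is a $Q$-property: $Q(\varepsilon)$ and $Q(a)$ hold trivially, every position has at least the one-letter palindrome ending there, and every position of every word introduces at most one new palindrome. With that in hand, $\PP_n(w) = \HH^Q_n(w)$ for this choice of $Q$, and the dichotomy applies verbatim.

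Next I would argue by contradiction. Suppose $w$ has property $\PP_{Pal}(w)$ but is ultimately periodic. Then by the Corollary above, either (a) $\PP_n(w) = 0$ for infinitely many $n$, or (b) there is some $k$ with $\PP_n(w) = 1$ for all $n \geq k$. But the assumption $\PP_{Pal}(w)$ says $\PP_n(w) = 1$ for every even $n$ and $\PP_n(w) = 2$ for every odd $n$; in particular $\PP_n(w)$ is never $0$, which kills alternative (a), and $\PP_n(w) = 2 > 1$ for all odd $n \geq k$, which kills alternative (b). This contradiction forces $w$ to be aperiodic.

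I do not expect any genuine obstacle here: the statement is a direct specialization of the already-proved Corollary, and the only point needing (a one-line) verification is that the palindrome property satisfies the three defining clauses of a $Q$-property, which the paper has essentially already recorded. If one wanted to avoid leaning on the $Q$-machinery at all, an alternative route would be to quote \autoref{thm:aperiodic_classical} together with a known bound relating $\PP_n$ and $\C_n$ for periodic words, but the $Q$-property argument above is shorter and already packaged by the preceding results, so that is the route I would take.
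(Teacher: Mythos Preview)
Your proposal is correct and matches the paper's approach exactly: the paper states this corollary (together with \autoref{cor:jsp_aperiodic}) as an immediate consequence of the preceding Corollary on ultimately periodic words combined with the observation after \autoref{lem:q_upper_bound} that ``being a palindrome'' is a $Q$-property. Your write-up simply spells out the one-line contradiction that the paper leaves implicit.
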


  However there exist aperiodic infinite words $w$ having $\Pri_n(w) = 0$ for infinitely many $n$. One
  example is the Thue-Morse word. See \autoref{prp:t_no_odd}.

  To simplify notations, we define for a word $w$ the following properties:
  \begin{itemize}
    \item[-] $\Rch_n(w) \Longleftrightarrow   \Pri_n(w) = \Pal_n(w)$,
    \item[-] $\Spe_n(w) \Longleftrightarrow $ there exists a unique right special factor of length $n-1$,
    \item[-] $\Bal_n(w) \Longleftrightarrow $ all factors of $w$ of length $n$ are balanced,
    \item[-] $\Rev_n(w) \Longleftrightarrow $ for each factor $u$ of length $n$, also $\mirror{u}$ is a factor of $w$.
  \end{itemize}

  \begin{lemma}\label{lem:bal_return_words}
    Let $w$ be an infinite binary word for which $\Bal_m(w)$ holds for all $0 \leq m \leq n$. Then any right
    special factor of $w$ with length strictly less than $n$ has at most two complete return factors in $w$.
  \end{lemma}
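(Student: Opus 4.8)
The plan is to argue by contradiction: suppose $u$ is a right special factor of $w$ with $|u| < n$ that has three or more complete return factors in $w$. This means there are three occurrences of $u$ in $w$ that are "consecutive" in the sense that between the first and third there is a middle occurrence, giving complete return factors $r_1$ (from the first occurrence of $u$ to the second) and $r_2$ (from the second to the third). I would first recall that since $u$ is right special, both $u0$ and $u1$ are factors of $w$; by balance (applied via $\Bal_m(w)$ for the relevant small $m$) one then controls how the occurrences of $u$ can be followed. The key structural fact to extract is that each complete return factor to $u$ has the form $u v_i$ where $v_i$ is the word "added" after an occurrence, and that $|uv_i| - |u| = |v_i|$ is the gap between consecutive occurrences.

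The heart of the argument is a balance/counting estimate. Let the three (or more) complete return factors have lengths corresponding to gaps $g_1 \le g_2 \le \cdots$ between successive occurrences of $u$. I would show that if there were three distinct complete return factors, one could exhibit two factors of some common length $m \le n$ whose numbers of $1$'s differ by at least $2$, contradicting $\Bal_m(w)$. Concretely: the occurrences of $u$ subdivide a long stretch of $w$, and the "blocks" between them must have very constrained letter-counts; having three genuinely different return factors forces at least two different block-length patterns, and sliding a window of appropriate length across the differing blocks produces the forbidden imbalance. One expects to use here that $u$ itself is balanced (so its two one-letter extensions $u0, u1$ already sit at the two extreme values of the $1$-count among factors of length $|u|+1$), which pins down what can legally appear around occurrences of $u$.

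I would organize the steps as: (1) fix the contradiction hypothesis and name three occurrences and the associated return factors $r_1, r_2$; (2) use right-specialness plus $\Bal_{|u|+1}(w)$ to determine the letters immediately following each occurrence of $u$, and symmetrically the letters preceding (using $\Rev$-type / border reasoning implicit in balance — actually just balance suffices); (3) compare the two return factors and locate a length $m \le n$ at which they, or factors derived from them by the window-sliding trick, realize $1$-counts differing by $\ge 2$; (4) conclude $\neg\Bal_m(w)$, contradicting the hypothesis. The main obstacle I anticipate is step (3): making the window-sliding estimate precise, i.e. pinning down exactly which length $m$ witnesses the imbalance and verifying $m \le n$ (this is why the hypothesis is $|u| < n$ rather than $|u| \le n$ — the extra room is presumably needed so the witnessing length stays within the balanced range). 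A secondary subtlety is handling the case where two of the three return factors have the same length but differ as words; there one compares them position by position and again extracts a length at which a shifted comparison yields imbalance. Everything else — the definitions of return factor, the role of right-specialness — is bookkeeping that follows directly from the terminology section and the earlier lemmas on complete return factors.
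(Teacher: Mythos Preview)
Your approach has both a setup confusion and a genuine gap at the crucial step, and it diverges substantially from the paper's much simpler argument.

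On the setup: ``three or more complete return factors'' means three \emph{distinct words}, each a complete first return to $u$, occurring as factors of $w$. It does not mean three consecutive occurrences of $u$ --- that would produce only two return factors $r_1,r_2$ (possibly equal). So you would need to name three distinct return words $r_1,r_2,r_3$, not two.

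On the gap: your step (3) is the entire content of the lemma, and the sliding-window/counting idea has no evident reason to succeed. Complete return factors to $u$ may have length far exceeding $n$ --- nothing in the hypothesis bounds them --- so there is no mechanism ensuring the witnessing window length $m$ stays $\le n$. You flag this as the main obstacle but offer no way past it; as written this is a plan without the key idea.

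The paper's proof is structural, not metric. First it shows in one line that each length $m<n$ admits at most one right special factor: if $u,v$ were two such, with longest common suffix $z$, then $aza$ and $\hat a z \hat a$ both occur, violating $\Bal_{|z|+2}(w)$. Second, given two distinct complete returns $v_1,v_2$ to $u$, let $x$ be their longest common prefix. If $|x|>|u|$ then $x$ is right special, so its length-$|u|$ suffix is right special, hence equals $u$ by the uniqueness just proved --- but that places an internal occurrence of $u$ inside $v_1$ and $v_2$, contradicting that they are complete first returns. Thus $x=u$: any two distinct complete returns diverge immediately after the prefix $u$. Over a binary alphabet this forces at most two complete returns (one beginning $u0\ldots$, the other $u1\ldots$). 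Balance is invoked only at lengths $\le |u|+1\le n$, and no counting is needed.
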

  \begin{proof}
    First we reason that there exists at most one right special factor of length $m < n$. Suppose on the contrary that
    there are two right special factors of length $m$, say $u$ and $v$. Let $z$ be the longest common suffix of $u$ and
    $v$, i.e. $u = u' az$ and $v = v'\hat{a}z$ for some letter $a$. Since $u$ and $v$ are right special,
    $aza, \hat{a}z\hat{a} \in \Fac(w)$, contradicting $\Bal_m(w)$ for some $m \leq n$.

    Let then $u$ be a right special factor of $w$ with length $|u| < n$. Suppose that $v_1$ and $v_2$ are two distinct
    complete first returns to $u$ in $w$. Let $x$ be the longest common prefix of $v_1$ and $v_2$. We claim that $x = u$.
    Assume on the contrary that $|x| > |u|$. Then $v_1 = xav'_1$ and $v_2 = x\hat{a}v'_2$ for some letter $a$, and hence
    $x$ is a right special factor. Now the suffix of $x$ of length $|u|$ is right special, so by the reasoning in the
    beginning of the proof, $x$ has $u$ as a suffix. This however contradicts the fact that $v_1$ and $v_2$ are complete
    returns to $x$. Now if there was a third complete return to $u$ in $w$, then it would have a common prefix of length
    $|u| + 1$ with either $v_1$ or $v_2$, which is not possible by the above.
  \end{proof}

  %Consider as an example the word $w = 001000100100001$. Now $\Bal_m(w)$ holds for $m = 0,1,2,3$.
  %However the factor $01$, which is not right special, has three complete returns $010001, 01001$
  %and $0100001$. This shows that it's necessary to assume in the above Lemma that $u$ is right special.

  \begin{proposition}\label{prp:balanced_rich}
    Let $w$ be an infinite binary word. If $\Bal_m(w)$ holds for all $0 \leq m \leq n$, then $\Rch_m(w)$ holds for all
    $0 \leq m \leq n + 1$. Specifically an infinite balanced binary word is rich.
  \end{proposition}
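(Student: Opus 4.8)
The plan is to prove by induction on $m$ that $\Rch_m(w)$ holds for all $m$ with $0\le m\le n+1$, that is, that $\Pri_m(w)=\Pal_m(w)$ throughout this range; the last assertion then follows by letting $n\to\infty$ and invoking \autoref{prp:rich_privileged=palindrome}. The cases $m\le 1$ are immediate, so fix $m$ with $2\le m\le n+1$ and assume $\Pri_k(w)=\Pal_k(w)$ for all $k<m$. The inclusion $\Pal_m(w)\subseteq\Pri_m(w)$ needs no balance hypothesis: given a palindromic factor $u$ with $|u|=m$, let $p$ be its longest proper palindromic prefix, which is then also a suffix of $u$ and, by the induction hypothesis, privileged. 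If $u$ were not a complete first return to $p$, the second occurrence of $p$ in $u$ would cut off a proper prefix $q$ of $u$ with $|p|<|q|<m$ that is a complete first return to the privileged word $p$, hence privileged, hence by the induction hypothesis a palindrome, contradicting the maximality of $p$. So $u$ is a complete first return to the privileged word $p$, i.e.\ $u\in\Pri_m(w)$.

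For the reverse inclusion $\Pri_m(w)\subseteq\Pal_m(w)$, let $v$ be privileged with $|v|=m$; it is a complete first return to a privileged word $v'$ with $|v'|<m$, and $v'$ is a palindrome by the induction hypothesis. Suppose, for contradiction, that $v$ is not a palindrome. Using \autoref{lem:privileged_prefix_suffix} together with the two lemmas following it and the induction hypothesis, one first checks that $v'$ is simultaneously the longest palindromic prefix and the longest palindromic suffix of $v$. If the prefix occurrence and suffix occurrence of $v'$ in $v$ overlap or abut, write $v=\rho v'=v'\sigma$ with $|\rho|=|\sigma|=|v|-|v'|\le|v'|$; then $\rho$ is the prefix of $v'$ of that length and $\sigma$ its suffix of that length, so $\sigma=\mirror{\rho}$ because $v'$ is a palindrome, giving $\mirror{v}=\mirror{v'}\,\mirror{\rho}=v'\sigma=v$, a contradiction. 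Hence $v=v'\,t\,v'$ for some word $t$, with $v'$ occurring in $v$ only as a prefix and as a suffix; and since a word of length at most one is a palindrome while $v=v'tv'$ is not, $t$ is a nonpalindrome with $|t|\ge 2$.

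It remains to contradict the balance of $w$ from this configuration, and this is the step I expect to be the main obstacle. The idea is to combine \autoref{lem:bal_return_words}, which bounds the number of complete return factors of a right special factor, with \autoref{prp:unbalanced_pair}, which pins down the shape of a minimal unbalanced pair, so as to produce a pair $(0x0,1x1)$ with $x$ a palindrome among the factors of $v$ (possibly together with one letter of $w$ adjacent to an occurrence of $v$), contradicting $\Bal_{|x|+2}(w)$. I would split on whether $v'$ is right special: if it is, then $v'$ has at most two complete return factors in $w$, which by the proof of \autoref{lem:bal_return_words} already differ in the letter immediately after $v'$, and playing $v$ off against the second complete first return $\mirror{v}=v'\,\mirror{t}\,v'$ to $v'$ — when $\mirror{v}$ is itself a factor of $w$ — should force the desired contradiction; the genuinely delicate cases are when $v'$ is not right special and when $\mirror{v}\notin\Fac(w)$, since $\Fac(w)$ need not be closed under reversal, and there the unbalanced pair has to be exhibited directly. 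Completing this case analysis finishes the induction, and letting $n$ be arbitrarily large, together with \autoref{prp:rich_privileged=palindrome}, gives that every infinite balanced binary word is rich.
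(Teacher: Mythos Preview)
Your inclusion $\Pal_m(w)\subseteq\Pri_m(w)$ is correct and essentially identical to the paper's Case~2. Your treatment of the overlapping case in $\Pri_m(w)\subseteq\Pal_m(w)$ is also fine and matches the paper. The gap is exactly where you flag it: the non-overlapping case $v=v'\,t\,v'$. You sketch an approach based on whether $v'$ is right special and on comparing $v$ with $\mirror v$, but you yourself note that $\mirror v$ need not lie in $\Fac(w)$ and that $v'$ need not be right special; you do not actually close either of these cases. As written, the argument is incomplete.

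The paper's idea avoids both obstacles by passing from $v'$ to its \emph{interior}: write $v'=au'a$ (possible once $|v'|\ge 3$; the short cases $|v'|\le 2$ are handled directly). Then $v=au'a\,\lambda\,au'a$, and one studies the central block $z=u'a\,\lambda\,au'$. Either $z$ is itself a complete first return to the palindrome $u'$, hence privileged and, by the induction hypothesis, a palindrome; or $u'$ occurs at least three times in $z$. In the latter case the first return to $u'$ inside $z$ begins with $u'a$, while the absence of $au'a$ in the interior forces an occurrence of $au'\hat a$, producing a second complete return to $u'$ beginning with $u'\hat a$. Thus $u'$ is automatically right special, and \autoref{lem:bal_return_words} gives that these are the \emph{only} two returns $p,q$ to $u'$; both are palindromes by the induction hypothesis. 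Balance at length $|v'|$ (which holds since $|v'|\le n$) rules out $\hat a u'\hat a$, so $p$ and $q$ must alternate in $z$, and since $z$ begins and ends with $p$ one concludes that $z$, and hence $v$, is a palindrome.

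The point you are missing is this trick of peeling off one letter from each side of $v'$: it manufactures right-specialness for free and keeps the whole argument inside $\Fac(w)$, so neither of your ``delicate cases'' arises.
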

  \begin{proof}
    The claim clearly holds for $m \leq 1$. Assume then that $\Bal_m(w)$ holds for all $0 \leq m \leq k \leq n$. We will
    show that then $\Rch_{k+1}(w)$ holds. \\

    \noindent
    \textbf{Case 1.} $\Pri_{k+1}(w) \subseteq \Pal_{k+1}(w)$ \\
    Let $x \in \Pri_{k+1}(w)$. Then $x$ is a complete first return to a shorter privileged word $u$. By the induction
    hypothesis $u$ is a palindrome. If $u$ overlaps with itself in $x$ or $x = u^2$, then $x$ must be a palindrome. Assume
    that this is not the case.

    Now if $|u| = 1$, then $x = u \hat{u}^l u$ is a palindrome. Suppose that $|u| = 2$, so $u = aa$ for some letter $a$.
    Then $x = aa\lambda aa$ for some $\lambda \neq \varepsilon$. Hence there exists in $x$ a complete first return to $a$
    of the form $a \hat{a}^l a$ for some $l \geq 1$. By \autoref{lem:bal_return_words} the words $a\hat{a}^l a$ and $aa$
    are the only complete first returns to $a$. Then as $aa$ is not a factor of $a\lambda a$, it must be that
    $a\lambda a = (a \hat{a}^l)^t a$ for some $t \geq 1$. Thus $x$ is a palindrome.

    We may now assume that $|u| \geq 3$, Write $u = au'a$ for some letter $a$ and $u' \neq \varepsilon$. Note that $u'$ is
    a palindrome, and hence by the hypothesis privileged. Now $x = au'a\lambda au'a$ with $\lambda \neq \varepsilon$.
    Consider $z = u'a \lambda au'$, the center of $x$. We will prove that $z$ is a palindrome. From this it follows that
    $x$ too is palindromic. If $z$ is a complete first return to $u'$, then $z$ is privileged, and by hypothesis a
    palindrome. Assume then that $z$ contains at least three occurrences of $u'$. Now word $z$ has as a proper prefix
    a complete return to $u'$ which begins with $u'a$. Denote this prefix as $p$. As $x$ is a complete first
    return to $u = au'a$, the word $z$ doesn't have $au'a$ as a factor. Hence it now must have $au' \hat{a}$ as a factor.
    Therefore $z$ contains as a factor a complete first return to $u'$ beginning with $u'\hat{a}$. Denote
    this factor by $q$. For a better grasp of the situation see \autoref{fig:balanced_rich}. As $u'$ is right special,
    by \autoref{lem:bal_return_words} words $p$ and $q$ are the only
    complete return factors of $u'$ in $w$. Since $au'a$ and $\hat{a} u' \hat{a}$ are not factors of $z$ ($z$ is balanced), the
    occurrences of $p$ and $q$ must alternate in $z$. Since both $p$ and $q$ are palindromes as complete
    first returns to $u'$ and $z$ begins and ends with $p$, it follows that $z$ is a palindrome.\\

    \noindent
    \textbf{Case 2.} $\Pal_{k+1}(w) \subseteq \Pri_{k+1}(w)$ \\
    Let then $x \in \Pal_{k+1}(w)$ and $u$ its longest proper border. Now $u$ must be a palindrome, and hence by the
    induction hypothesis, a privileged word. The word $x$ must be a complete first return to $u$, since otherwise there
    would be a privileged proper prefix $v$ longer than $u$. That would be a contradiction with the maximality of $u$, since
    $v$ would also be a palindrome by the induction hypothesis. Therefore $x$ is privileged.\\

    \noindent
    The last claim follows now from \autoref{prp:rich_privileged=palindrome}.
  \end{proof}

  \begin{figure}[t]
  \caption{A picture clarifying the proof of \autoref{prp:balanced_rich}. Note that not all occurrences of $u'$
  need to be non-overlapping.}
  \label{fig:balanced_rich}
  \begin{center}
  \begin{tikzpicture}[
    word/.style={thick,draw=black!100},
    box/.style={thin,draw=black!100,minimum width=1cm,font=\small},
    k/.style={font=\small}]

    \node at (-0.5,-0.2) [font=\small] {$z$};

    \node (R1) at (0.35,0) [box] {$u'$};
    \node (A1) [right of=R1,xshift=-0.3cm,k] {$a$};
    \node (D1) [right of=A1,xshift=-0.6cm,k] {$\ldots$};
    \node (A2) [right of=D1,xshift=-0.6cm,k] {$a$};
    \node (R2) [right of=A2,xshift=-0.3cm] [box] {$u'$};
    \node (A3) [right of=R2,xshift=-0.3cm,k] {$\hat{a}$};
    \node (D2) [right of=A3,xshift=-0.6cm,k] {$\ldots$};
    \node (A3) [right of=D2,xshift=-0.6cm,k] {$\hat{a}$};
    \node (R3) [right of=A3,xshift=-0.3cm] [box] {$u'$};

    \node (D3) [right of=R3,xshift=0.5cm,k] {$\ldots$};

    \node (R4) [right of=D3,xshift=0.5cm] [box] {$u'$};
    \node (A4) [right of=R4,xshift=-0.3cm,k] {$\hat{a}$};
    \node (D4) [right of=A4,xshift=-0.6cm,k] {$\ldots$};
    \node (A5) [right of=D4,xshift=-0.6cm,k] {$\hat{a}$};
    \node (R5) [right of=A5,xshift=-0.3cm] [box] {$u'$};
    \node (A6) [right of=R5,xshift=-0.3cm,k] {$a$};
    \node (D5) [right of=A6,xshift=-0.6cm,k] {$\ldots$};
    \node (A7) [right of=D5,xshift=-0.6cm,k] {$a$};
    \node (R6) [right of=A7,xshift=-0.3cm] [box] {$u'$};

    \draw [decorate,decoration={brace,amplitude=6pt},xshift=0,yshift=0] (3.05,-0.4) -- (-0.16,-0.4) node [black,midway,yshift=-0.5cm] {\footnotesize $p$};
    \draw [decorate,decoration={brace,amplitude=6pt},xshift=0,yshift=0] (2.05,0.42) -- (5.25,0.42) node [black,midway,yshift=0.5cm] {\footnotesize $q$};

    \draw [decorate,decoration={brace,amplitude=6pt},xshift=0,yshift=0] (7.25,0.42) -- (10.45,0.42) node [black,midway,yshift=0.5cm] {\footnotesize $q$};
    \draw [decorate,decoration={brace,amplitude=6pt},xshift=0,yshift=0] (12.68,-0.4) -- (9.47,-0.4) node [black,midway,yshift=-0.5cm] {\footnotesize $p$};

    \coordinate (A) at (-0.16,-0.25);
    \coordinate (B) at (12.66,-0.25);
    \draw[word,-] (A) -- (B);
  \end{tikzpicture}
  \end{center}
  \end{figure}
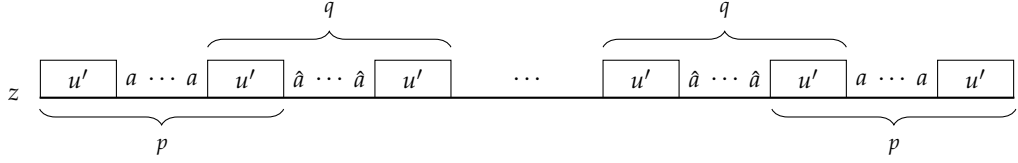

  We are now ready to prove the other direction of \autoref{thm:sturmian_pri_characterization}. The proof is
  similar to the proof of \autoref{thm:sturmian_pal_characterization} in \cite{1999:palindromes_and_sturmian_words}.

  \begin{lemma}\label{lem:characterization_2}
    An infinite word $w$ having the property $\JSP(w)$ is Sturmian.
  \end{lemma}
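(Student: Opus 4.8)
The plan is to show that an infinite word $w$ with property $\JSP(w)$ is aperiodic and balanced, hence Sturmian by the characterization quoted above. Aperiodicity is immediate from \autoref{cor:jsp_aperiodic}, so the real work is balancedness. I would argue by contradiction: suppose $w$ is unbalanced. By \autoref{prp:unbalanced_pair}, applied to $X = \Fac(w)$, there is a unique minimal unbalanced pair $(0x0,1x1)$ with $x$ a palindrome; fix the shortest such $x$, and set $n = |x|+2$. The idea is to work at lengths below $n$, where $w$ is still balanced, use \autoref{prp:balanced_rich} to conclude that $w$ is rich up to length $n$ (so $\Pri_m(w)=\Pal_m(w)$ for $m \le n$ by the $\Rch_m$ notation), and then derive a contradiction with the privileged complexity prescribed by $\JSP(w)$ — most plausibly by exhibiting too many privileged factors of some length $\le n$, or by locating a short privileged factor that $\JSP$ forbids.

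The key steps, in order. First, record that $\Bal_m(w)$ holds for all $m < n$ by minimality of the unbalanced pair (a shorter unbalanced pair would contradict either minimality of $x$ or the uniqueness in \autoref{prp:unbalanced_pair}). Second, invoke \autoref{prp:balanced_rich}: since $\Bal_m(w)$ holds for $0 \le m \le n-1$, we get $\Rch_m(w)$ for $0 \le m \le n$, i.e.\ $\Pri_m(w) = \Pal_m(w)$ for all $m \le n$. Third, combine this with $\JSP(w)$: for every $m \le n$, $\PP_m(w) = \A_m(w)$ equals $1$ when $m$ is even and $2$ when $m$ is odd. Fourth — and this is the crux — analyze the factors $0x0$ and $1x1$. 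Both are palindromes (as $x$ is), of the same length $|x|+2 = n$, and they are distinct; if $n$ is even this already gives $\PP_n(w) \ge 2$, contradicting $\PP_n(w)=1$. If $n$ is odd, $|x|$ is odd, so $x$ itself is a palindrome of odd length; I would then look at the two factors $0x$ and... more carefully, I would chase a length where the two "branches" $0x0$ and $1x1$ force either two even-length palindromes or three odd-length ones. A clean way: since $x$ is a palindrome occurring with both left extensions $0x$ and $1x$ and both right extensions $x0$ and $x1$, and $w$ is balanced up to length $n-1$, the palindromic-complexity bookkeeping at length $n-1$ (even, so only one palindrome allowed) together with the two distinct palindromes $0x0,1x1$ at length $n$ overshoots the budget. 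The contradiction finishes the proof.

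The main obstacle I anticipate is the parity case analysis at the end: making sure that, whatever the parity of $n=|x|+2$, the existence of the two distinct palindromes $0x0$ and $1x1$ (plus whatever palindromes are forced one level down, such as $x$ and possibly a shorter central palindrome) genuinely violates the $1/2$ alternation of $\PP_m = \A_m$ rather than just meeting it. This likely requires being careful that $0x0$ and $1x1$ are really the \emph{only} palindromes of length $n$ one can guarantee, versus using the richness count $\PP_n(w) = \#\{\text{positions in a suitable factor introducing a new palindrome}\}$ to pin the number exactly. I would handle this by passing to a long enough finite factor $u$ of $w$ containing occurrences of both $0x0$ and $1x1$, applying the exact richness equality $|\Pal(u)| = |u|+1$ and the $Q$-factor uniqueness (each position introduces at most one new palindrome) to count palindromes of each length precisely, and reading off the contradiction with $\JSP$. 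The other routine-but-necessary point is justifying $\Bal_m(w)$ for $m<n$ rigorously from minimality; I expect this to be short once the uniqueness clause of \autoref{prp:unbalanced_pair} is used correctly.
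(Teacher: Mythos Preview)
Your overall strategy---show $w$ is aperiodic (via \autoref{cor:jsp_aperiodic}) and balanced, then invoke the aperiodic-balanced characterization---is the same as the paper's. The even-parity case is fine: if $n=|x|+2$ is even, the two palindromes $0x0,1x1$ already contradict $\A_n(w)=\PP_n(w)=1$. The gap is in the odd case, and your proposal does not close it.

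When $n$ is odd, $\JSP(w)$ allows two palindromes of length $n$, so $0x0$ and $1x1$ alone yield no contradiction; you must produce a \emph{third} palindrome of length $n$. Your ``bookkeeping at length $n-1$'' suggestion does not work: $n-1$ is even, $\PP_{n-1}(w)=1$ is consistent, and nothing there forces an extra palindrome at length $n$. Your alternative---take a finite factor $u$ containing both $0x0$ and $1x1$ and use $|\Pal(u)|=|u|+1$---fails because you only have $\Rch_m(w)$ for $m\le n$, whereas any such $u$ has $|u|>n$, so you do not know $u$ is rich. The natural third palindrome is $ata$, where $t$ is the second palindrome of length $n-2$ (recall $\Pal_{n-2}(w)=\{x,t\}$ since $n-2$ is odd). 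From balancedness up to $n-1$ you can deduce that $t$ is neither right nor left special, so $t$ has unique one-letter extensions $tc$ and $dt$; but to conclude $c=d$ (hence $ctc\in\Pal_n(w)$) you need to know that $ct=\mirror{tc}$ is a factor of $w$, i.e.\ closure under reversal at length $n-1$.

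This is exactly why the paper does not argue directly but runs a simultaneous induction on the four properties $\Rch_n,\Bal_n,\Spe_n,\Rev_n$. The step $\Bal_{n+1}$ uses $\Rev_n$ to get $at\in\Fac_n(w)$ from $ta\in\Fac_n(w)$, whence $ata\in\Fac_{n+1}(w)$ and $|\Pri_{n+1}(w)|\ge 3$. In turn $\Rev_{n+1}$ is obtained from $\Bal_{n+1}$ plus aperiodicity by a counting argument (\autoref{prp:balanced_upper_bound} and \autoref{thm:aperiodic_classical}). You cannot drop $\Rev$ from the induction without supplying a different source for the third palindrome in the odd case; your proposal does not supply one.
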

  \begin{proof}
    First of all since $\A_1(w) = 2$, $\Pri_1(w) = \{0,1\}$ and the word $w$ is thus binary. Hence $\varepsilon$ is the
    unique right special factor of length $0$. Clearly $\Rch_1(w), \Spe_1(w), \Bal_1(w)$ and $\Rev_1(w)$ hold. We will
    next assume that $\Rch_n(w), \Spe_n(w), \Bal_n(w)$ and $\Rev_n(w)$ hold for $n \geq 1$, and prove that $w$
    satisfies all these properties for $n + 1$. This proves the claim.\\

    \noindent
    \textbf{Case 1.} $\Rch_{n+1}(w)$ \\
    By the induction hypothesis $\Bal_m(w)$ holds for all $m \leq n$. From \autoref{prp:balanced_rich} it follows that also
    $\Rch_{n+1}(w)$ holds.\\

    \noindent
    \textbf{Case 2.} $\Bal_{n+1}(w)$ \\
    Assume on the contrary that $\Bal_{n+1}(w)$ doesn't hold. Then by \autoref{prp:unbalanced_pair} there exists a
    palindromic factor $x$ such that $0x0, 1x1 \in \Fac_{n+1}(w)$. By Case 1 we know that $\Pri_{n+1}(w) = \Pal_{n+1}(w)$,
    and since palindromes $0x0, 1x1 \in \Pal_{n+1}(w)$, by assumption, $n+1$ is odd, and hence also $n-1$ is odd. Again by
    assumption $\Pal_{n-1}(w) = \{x,t\}$ where $x \neq t$. Moreover $x$ is a bispecial factor of length $n-1$. Since
    $\Spe_n(w)$ holds, we conclude that $t$ isn't right special. Now $ta \in \Fac_n(w)$ for some letter $a$. By the
    property $\Rev_n(w)$, also $at \in \Fac_n(w)$. Since $t$ isn't right special, $ata \in \Fac_{n+1}(w)$. Thus
    $\{0x0, 1x1, ata\} \subseteq \Pal_{n+1}(w) = \Pri_{n+1}(w)$, contradicting the fact that $\A_{n+1}(w) = 2$.\\

    \noindent
    \textbf{Case 3.} $\Spe_{n+1}(w)$ \\
    Since by \autoref{cor:jsp_aperiodic} the word $w$ is aperiodic, we have that it has at least one right special factor
    of length $n$ (\autoref{thm:aperiodic_classical}). Arguing as in the first paragraph of the proof of
    \autoref{lem:bal_return_words} we see that it has at most one right special factor of length $n$.\\

    \noindent
    \textbf{Case 4.} $\Rev_{n+1}(w)$ \\
    Denote $\mirror{\Fac}_n(w) = \{\mirror{w} : w \in \Fac_n(w)\}$.
    Consider the set $X = \bigcup_{i=0}^{n+1} \Fac_i(w) \cup \mirror{\Fac}_i(w)$. This set is balanced since otherwise by
    \autoref{prp:unbalanced_pair} there would exists palindromes $0x0, 1x1 \in X$, and hence (since these
    words are palindromes) $0x0, 1x1 \in \Fac_m(w)$ for some $m \leq n+1$. This is a contradiction with the induction
    hypothesis or the Case 2. Thus by \autoref{prp:balanced_upper_bound} $|X \cap \{0,1\}^{n+1}| \leq n+2$. On the other
    hand by \autoref{thm:aperiodic_classical} $|X \cap \{0,1\}^{n+1}| \geq C_{n+1}(w) = n+2$. Thus
    $|X \cap \{0,1\}^{n+1}| = n+2$, and it must therefore be that $\Fac_{n+1}(w) = \mirror{\Fac}_{n+1}(w)$ which means that
    $\Rev_{n+1}(w)$ is satisfied.
  \end{proof}

  For the converse of \autoref{lem:characterization_2} we state the immediate Corollary of \autoref{prp:balanced_rich}.
  For another proof see Corollary 4 of \cite{2001:episturmian_words_and_some_constructions_of_de}.

  \begin{corollary}\label{cor:sturmian_rich}
    Sturmian words are rich. \qed
  \end{corollary}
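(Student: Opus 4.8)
The plan is to obtain this as a one-line consequence of \autoref{prp:balanced_rich}. First I would invoke the characterization of Sturmian words recalled just above, namely that an infinite binary word is Sturmian if and only if it is aperiodic and balanced. In particular, any Sturmian word $w$ is balanced, and by the definition of a balanced infinite word this means exactly that $\Bal_m(w)$ holds for every $m \geq 0$.

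Next I would apply \autoref{prp:balanced_rich}. Since $\Bal_m(w)$ holds for all $m \geq 0$ (in particular for all $0 \leq m \leq n$ and every $n$), the proposition yields that $\Rch_m(w)$ holds for all $m \geq 0$; equivalently $\Pri_m(w) = \Pal_m(w)$ for every $m$, and hence $\Pri(w) = \Pal(w)$. By \autoref{prp:rich_privileged=palindrome} this is precisely the statement that $w$ is rich. Indeed the concluding sentence of \autoref{prp:balanced_rich} already packages this deduction, so the corollary is essentially a restatement of it specialized to Sturmian words.

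There is essentially no obstacle here: all the combinatorial content was carried out inside the proof of \autoref{prp:balanced_rich} — the two-case induction deriving $\Pri_{k+1}(w) = \Pal_{k+1}(w)$ from balancedness up to length $k$, which in turn relies on \autoref{lem:bal_return_words}. The only point worth making explicit is the bookkeeping that ``balanced'' in the Sturmian characterization unwinds to the family of properties $\Bal_m(w)$, $m \geq 0$, which is immediate from the definition of balancedness for infinite words, so the hypothesis of \autoref{prp:balanced_rich} is met with the full range of lengths.
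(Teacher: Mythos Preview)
Your proposal is correct and matches the paper's approach exactly: the paper presents this as an immediate corollary of \autoref{prp:balanced_rich} (whose final sentence already states that an infinite balanced binary word is rich), combined with the characterization of Sturmian words as aperiodic balanced binary words. There is nothing to add.
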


  From this we easily deduce the converse result:
 
  \begin{lemma}\label{lem:characterization_1}
    Sturmian word $w$ has the property $\JSP(w)$.
  \end{lemma}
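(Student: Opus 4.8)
The plan is to obtain $\JSP(w)$ by identifying the privileged complexity with the palindromic complexity and then quoting the known palindromic complexity of Sturmian words. First I would invoke \autoref{cor:sturmian_rich} (itself an immediate consequence of \autoref{prp:balanced_rich}) to conclude that the Sturmian word $w$ is rich. Since an infinite word is rich precisely when all of its finite factors are rich, \autoref{prp:rich_privileged=palindrome} applies to every finite factor of $w$ and gives $\Pri(u)=\Pal(u)$ for each factor $u$; passing to factors of a fixed length $n$ this yields $\Pri_n(w)=\Pal_n(w)$ for every $n\ge 0$, and hence $\A_n(w)=\PP_n(w)$ for all $n$.

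It then only remains to read off the palindromic complexity. By \autoref{thm:sturmian_pal_characterization} (the direction that a Sturmian word has this palindromic complexity, due to Droubay and Pirillo), $\PP_n(w)=1$ for even $n$ and $\PP_n(w)=2$ for odd $n$. Substituting into the previous identity, $\A_n(w)$ takes exactly these values, which is the definition of the property $\JSP(w)$, and the lemma follows.

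I do not anticipate a genuine obstacle: all the substantive work has already been carried out in \autoref{prp:balanced_rich} and \autoref{prp:rich_privileged=palindrome}. The only point needing a line of care is the transition from the ``word'' statements of those results to the infinite word $w$, namely the remark that a privileged (respectively palindromic) factor of $w$ of length $n$ is the same thing as a privileged (respectively palindromic) factor of some sufficiently long prefix of $w$, so that counting over all finite factors recovers $\A_n(w)$ and $\PP_n(w)$. Should one wish to avoid citing the palindromic complexity of Sturmian words altogether, an alternative is to bound $\PP_n(w)$ directly from richness (via \autoref{thm:richness_characterization}, every position introduces a new palindrome) together with aperiodicity and balancedness; but invoking \autoref{thm:sturmian_pal_characterization} is the most economical route and is exactly what the phrase ``from this we easily deduce'' suggests.
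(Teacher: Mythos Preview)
Your proposal is correct and follows exactly the paper's own proof: invoke \autoref{cor:sturmian_rich} to get richness, apply \autoref{prp:rich_privileged=palindrome} to identify $\Pri(w)$ with $\Pal(w)$, and then read off the complexity from \autoref{thm:sturmian_pal_characterization}. The extra care you take about the finite/infinite transition is harmless but unnecessary here, since \autoref{prp:rich_privileged=palindrome} is stated and proved for an arbitrary (finite or infinite) word $w$, so $\Pri(w)=\Pal(w)$ holds directly for the infinite Sturmian word and immediately gives $\Pri_n(w)=\Pal_n(w)$ for all $n$.
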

  \begin{proof}
    By \autoref{cor:sturmian_rich} the Sturmian word $w$ is rich. Next, \autoref{prp:rich_privileged=palindrome} says that
    $\Pri(w) = \Pal(w)$, and hence by \autoref{thm:sturmian_pal_characterization} the word $w$ has the property $\JSP(w)$.
  \end{proof}

  Lemmas \ref{lem:characterization_2} and \ref{lem:characterization_1} establish
  \autoref{thm:sturmian_pri_characterization}. The proof of \autoref{lem:characterization_2} with minor
  modifications proves too that an infinite word $w$ having the property $\PP_{Pal}(w)$ is Sturmian: the Case
  1 is omitted, and the Case 2 needs to be slightly adjusted. Otherwise the proof can be kept
  intact, \autoref{cor:pal_aperiodic} ensures that an infinite word having property $\PP_{Pal}(w)$ must be aperiodic.

  Note that not every $Q$-complexity function characterizes Sturmian words. Take $Q$ to be the property
  “being a power of a letter”. Now for a Sturmian word $w$ either $00 \notin \Fac(w)$ or $11 \notin \Fac(w)$.
  By symmetry assume that $11 \notin \Fac(w)$. It's also well-known that there exists a $k$ such that
  $0^k \in \Fac(w)$, but $0^{k+1} \notin \Fac(w)$. Hence the $Q$-complexity of $w$ would be
  \begin{align*}
      \HH^Q_n(w) = 
      \begin{cases}
      1, & \text{if $n=0$ or $2 \leq n \leq k$} \\
      2, & \text{if $n=1$}\\
      0, & \text{otherwise}.
      \end{cases}
  \end{align*}
  However for instance the non-Sturmian word $(0^k1)^\omega$ has the same $Q$-complexity function.

  \subsection{Episturmian Words}
  We conclude \autoref{sec:sturmian_words} by considering briefly the privileged complexity of so-called episturmian words,
  which are a generalization of Sturmian words to arbitrary alphabet. Episturmian words have a rich theory,
  for more about these intriguing words, see the foundational paper \cite{2001:episturmian_words_and_some_constructions_of_de}
  and the survey \cite{2009:episturmian_words_a_survey}.

  \begin{definition}
    An infinite word $w$ over alphabet $A$ ($|A| \geq 2$) is \emph{episturmian} if $w$ has the property $\Rev_n(w)$ and has
    at most one right special factor of length $n$ for all $n \geq 0$.
  \end{definition}

  \begin{definition}
    Episturmian word $w$ is \emph{$A$-strict} if for each $n \geq 0$ there exists a unique right special factor $u$ of
    length $n$ and $ua \in \Fac(w)$ for all $a \in A$.
  \end{definition}

  Actually $A$-strict episturmian words over the alphabet $A$ are exactly the so-called \emph{Arnoux-Rauzy}
  words over $A$. Note that $A$-strict episturmian words are aperiodic, and if $A = \{0,1\}$, then these
  words are exactly the Sturmian words.

  In the Corollary 2 of \cite{2001:episturmian_words_and_some_constructions_of_de} it was proved that
  episturmian words are rich. Therefore we may proceed as we did with Sturmian words: the palindromic
  complexity of episturmian words gives us their privileged complexity by \autoref{prp:rich_privileged=palindrome}.

  The following is Theorem 4.4 in \cite{2002:episturmian_words_and_episturmian_morphisms}.

  \begin{theorem}{\emph{\cite{2002:episturmian_words_and_episturmian_morphisms}}}\label{thm:episturmian_pal_compl}
    Let $w$ be a $A$-strict episturmian word over the alphabet $A$. Then $w$ has palindromic complexity
    \begin{align*}
      \PP_n(w) = 
        \begin{cases}
        1, & \text{if $n$ is even} \\
        |A|, & \text{if $n$ is odd}
        \end{cases}
    \end{align*}
    for all $n \geq 0$. \qed
  \end{theorem}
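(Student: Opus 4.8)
The plan is to deduce this from the structure of standard episturmian words, mirroring the Droubay--Pirillo argument for Sturmian words. First I would reduce to the standard case: the palindromic complexity depends only on $\Fac(w)$, and every $A$-strict episturmian word has the same set of factors as the standard $A$-strict episturmian word with the same directive word, so it suffices to compute $\PP_n(s)$ for $s$ standard. Such an $s$ is obtained by iterated palindromic closure from a directive word in which every letter of $A$ occurs infinitely often; in particular its palindromic prefixes form a strictly increasing chain $\varepsilon = u_0 \subsetneq u_1 \subsetneq u_2 \subsetneq \cdots$ exhausting all palindromic prefixes of $s$, the factor complexity is $\C_n(s) = (|A|-1)n+1$, and for each $n$ there is a unique right special factor $R_n$, extended to the right by every letter of $A$, whose reversal $\mirror{R_n}$ is the unique left special factor. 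I would also use that episturmian words are rich (recalled just above), so by \autoref{thm:richness_characterization} a factor of $s$ of length $\geq 2$ is a palindrome if and only if it is a complete first return to a shorter palindrome, and every palindrome of $s$ is a complete first return to its longest proper palindromic border.

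With this in hand I would count $\PP_n(s)$ by induction on $n$, the base cases $\PP_0(s)=1$ and $\PP_1(s)=|A|$ being immediate. For the step, map each $p\in\Pal_n(s)$ to its longest proper palindromic border $v$, of some length $m<n$, for which $p$ is a complete first return to $v$; conversely, for a palindrome $v$ of length $m$ I would count how many palindromic complete first returns to $v$ in $s$ have length exactly $n$. The key input here is the description of (complete) return words to factors of an Arnoux--Rauzy word: a factor has exactly $|A|$ return words when its occurrences meet the left/right special branchings, and exactly one otherwise, and when the factor is a palindrome its complete return factors are again palindromes arranged symmetrically around its centre. Feeding this back, one verifies by induction that exactly one palindrome of each even length and exactly $|A|$ palindromes of each odd length occur, the even/odd dichotomy and the multiplicity $|A|$ arising from the interplay of the complete-first-return relation with the central-letter structure of odd palindromes, the unique right/left special factor of each length, and the $A$-strictness. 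Since the resulting formula involves only $|A|$ and $\Fac(w)$, it transfers back to the original $A$-strict episturmian word $w$.

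The main obstacle is this last bookkeeping step: showing that the complete-first-return relation propagates palindromes with branching multiplicity exactly $1$ at even lengths and exactly $|A|$ at odd lengths, uniformly in $n$. This needs tight control of the return-word structure of Arnoux--Rauzy words --- in particular that the unique left and right special factors of each length absorb all the multiplicity --- and is the place where the hypothesis that $w$ is $A$-\emph{strict} is genuinely used (dropping it allows odd-length palindromic complexities strictly between $1$ and $|A|$). Once \autoref{thm:episturmian_pal_compl} is in place, the privileged complexity of an $A$-strict episturmian word follows at once: being rich, such a word $w$ satisfies $\Pri(w)=\Pal(w)$ by \autoref{prp:rich_privileged=palindrome}, whence $\A_n(w)=\PP_n(w)$ equals $1$ for $n$ even and $|A|$ for $n$ odd.
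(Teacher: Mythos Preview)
The paper does not prove this theorem at all: it is quoted verbatim as Theorem~4.4 of \cite{2002:episturmian_words_and_episturmian_morphisms} and marked with \qed\ immediately after the statement, so there is no ``paper's own proof'' to compare against. Your outline is a plausible route to the result (reduction to the standard word, richness, return-word structure of Arnoux--Rauzy words), but as you yourself note, the real content sits in the ``bookkeeping step'' of showing that the complete-first-return map yields exactly one palindrome at each even length and exactly $|A|$ at each odd length; as written this step is asserted rather than carried out, so the proposal is a sketch rather than a proof. If you want to see an actual argument, consult the cited Justin--Pirillo paper, whose proof goes through the structure of bispecial factors and palindromic prefixes of the standard word rather than through return words per~se.
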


  Thus we have the following result:

  \begin{theorem}\label{thm:episturmian_pri_compl}
    Let $w$ be a $A$-strict episturmian word over the alphabet $A$. Then $w$ has privileged complexity
    \begin{align*}
      \A_n(w) = 
        \begin{cases}
        1, & \text{if $n$ is even} \\
        |A|, & \text{if $n$ is odd}
        \end{cases}
    \end{align*}
    for all $n \geq 0$. \qed
  \end{theorem}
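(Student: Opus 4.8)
The plan is to mimic exactly the strategy already used to pass from the palindromic complexity of Sturmian words (Theorem~\ref{thm:sturmian_pal_characterization}) to their privileged complexity (Lemma~\ref{lem:characterization_1}). The crucial input is that $A$-strict episturmian words are rich, which is Corollary~2 of \cite{2001:episturmian_words_and_some_constructions_of_de} and is recalled in the discussion preceding the statement. Given richness, Proposition~\ref{prp:rich_privileged=palindrome} yields $\Pri(w) = \Pal(w)$, hence $\Pri_n(w) = \Pal_n(w)$ for every $n \geq 0$, and therefore $\A_n(w) = \PP_n(w)$ for all $n$. Combining this equality with the value of $\PP_n(w)$ supplied by Theorem~\ref{thm:episturmian_pal_compl} gives precisely the claimed formula for $\A_n(w)$. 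This is essentially a two-line argument once the ingredients are lined up.

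Concretely, I would proceed in the following order. First, invoke the richness of $A$-strict episturmian words (cited result). Second, apply Proposition~\ref{prp:rich_privileged=palindrome} to obtain $\Pri(w) = \Pal(w)$, and note that intersecting both sides with $\Fac_n(w)$ gives $\Pri_n(w) = \Pal_n(w)$ for each $n$, whence $\A_n(w) = \PP_n(w)$. Third, substitute the palindromic complexity from Theorem~\ref{thm:episturmian_pal_compl}, which gives $\A_n(w) = 1$ for even $n$ and $\A_n(w) = |A|$ for odd $n$, completing the proof.

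I do not expect any genuine obstacle here: unlike Lemma~\ref{lem:characterization_2}, there is no converse direction to establish (the theorem is stated as a one-directional computation of $\A_n$), so the heavy machinery of $Q$-properties, Lemma~\ref{lem:bal_return_words} and Proposition~\ref{prp:balanced_rich} is not needed. The only point requiring a moment's care is the legitimacy of citing richness of episturmian words and the palindromic complexity formula as black boxes; both are explicitly referenced in the surrounding text, so they may be used freely. The mild subtlety worth a sentence in the write-up is that Proposition~\ref{prp:rich_privileged=palindrome} is stated for finite words $w$, so to apply it to the infinite word one passes through its finite factors: an infinite word is rich iff every factor is rich, each such factor $u$ then satisfies $\Pri(u) = \Pal(u)$, and taking the union over all factors gives $\Pri(w) = \Pal(w)$ for the infinite word as well.
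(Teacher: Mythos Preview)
Your proposal is correct and matches the paper's approach exactly: the paper derives this theorem as an immediate consequence by noting that episturmian words are rich (Corollary~2 of \cite{2001:episturmian_words_and_some_constructions_of_de}), invoking \autoref{prp:rich_privileged=palindrome} to identify privileged and palindromic factors, and then reading off the values from \autoref{thm:episturmian_pal_compl}. Your additional remark about passing through finite factors to apply \autoref{prp:rich_privileged=palindrome} is a harmless bit of extra care that the paper does not spell out.
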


  However the privileged complexity of $A$-strict episturmian words doesn't characterize them when $|A| > 2$.
  Words coding $r$-interval exchange transformations are a class of rich words which satisfy the palindromic
  complexity of \autoref{thm:episturmian_pal_compl} \cite{2007:factor_versus_palindromic_complexity_of_uniformly}.
  Being rich they also have the same
  privileged complexity as episturmian words. However words coding $r$-interval exchange transformations are
  not episturmian when $r>2$ (here $r$ is the number of letters). One example of such a word is the fixed point of the following morphism
  \begin{align*}
    \alpha:
    \begin{array}{l}
      a \mapsto c, \\
      b \mapsto ca, \\
      c \mapsto caba.
    \end{array}
  \end{align*}
  The fixed point isn't episturmian since both letters $a$ and $c$ are right special, but it satisfies the complexities
  of Theorems \ref{thm:episturmian_pal_compl} and \ref{thm:episturmian_pri_compl}.

  Actually not even both factor complexity and privileged complexity of $A$-strict episturmian words characterizes them since
  words coding $r$-interval exchange transformations have the same factor complexity as episturmian words
  \cite{2007:factor_versus_palindromic_complexity_of_uniformly}.

  \section{Privileged Complexity and the Thue-Morse Word}\label{sec:thue_morse}

  In this section we investigate briefly the privileged complexity of the Thue-Morse word. The infinite Thue-Morse
  word $t$ is defined as the fixed point of the morphism $\varphi$:
  \begin{align*}
    \varphi:
    \begin{array}{l}
      0 \mapsto 01, \\
      1 \mapsto 10.
    \end{array}
  \end{align*}
  For more information about the Thue-Morse word, see Chapter 2 of \cite{1983:combinatorics_on_words}.
  The word $t$ has the following well-known property:

  \begin{theorem}
    The Thue-Morse word $t$ is overlap free. \qed
  \end{theorem}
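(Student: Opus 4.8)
The plan is to argue by contradiction, exploiting the self-similarity $t = \varphi(t)$. Recall that a finite word is an \emph{overlap} if it has the form $axaxa$ with $a$ a single letter and $x$ a (possibly empty) word; call $p := |ax| \ge 1$ its \emph{period}, so an overlap of period $p$ has length $2p+1$, and ``$t$ is overlap free'' means precisely that no such word is a factor of $t$. Two elementary facts drive the argument. First, since $t = \varphi(t)$, the word $t$ is, starting from position $0$, a concatenation of the blocks $\varphi(0) = 01$ and $\varphi(1) = 10$; any three consecutive positions of such a concatenation contain two positions lying in one common block, whose letters differ, so $t$ contains neither $000$ nor $111$. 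Second, writing $t = t_0 t_1 t_2 \cdots$ with positions indexed from $0$, the identity $t = \varphi(t)$ gives the recursion $t_{2n} = t_n$ and $t_{2n+1} = \widehat{t_n}$ for all $n$, where $\widehat{\cdot}$ is the letter exchange.

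Suppose now that $t$ contains an overlap and, among all occurrences of overlaps in $t$, fix one of minimal length, say $w = t_k t_{k+1} \cdots t_{k+2p}$ of period $p$. By the first fact $w \ne aaa$, so $p \ge 2$, and minimality says $t$ contains no overlap of period less than $p$; having period $p$ means $t_{k+i} = t_{k+i+p}$ for $0 \le i \le p$. I would split on the parity of $p$, and within each case distinguish whether $k$ is even (so $w$ begins at a block boundary) or odd (so $w$ begins one letter inside a block), the two subcases being mirror images obtained by shifting the base point by one and using $t_{2n+1} = \widehat{t_n}$ in place of $t_{2n} = t_n$.

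If $p$ is \textbf{even}, consider the $p+1$ positions $k, k+2, \dots, k+2p$ of the occurrence, which all have the parity of $k$. By the recursion, the letters of $t$ at these positions form, after complementing every letter if $k$ is odd, a genuine factor $v$ of $t$ of length $p+1$: for $k$ even one reads off $v = t_{k/2}\, t_{k/2+1} \cdots t_{k/2+p}$, and for $k$ odd the letters spell $\widehat{v}$ with $v = t_{(k-1)/2}\, t_{(k-1)/2+1} \cdots t_{(k-1)/2+p}$. Feeding the admissible even indices $i = 0, 2, \dots, p$ into $t_{k+i} = t_{k+i+p}$ and translating through the recursion shows that $v$ has period $p/2$. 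As $|v| = p+1 = 2(p/2)+1$, the word $v$ is an overlap of period $p/2$ with $1 \le p/2 < p$ occurring in $t$, contradicting the minimality of $w$.

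If $p$ is \textbf{odd} (so $p \ge 3$) the period no longer respects parities, so instead of a shorter overlap I would extract a forbidden factor; take $k$ even. Let $\beta_j := t_{k+2j}$ be the first letter of the $j$-th block of the occurrence (positions $k+2j$ and $k+2j+1$); then $\beta_j = t_{k/2+j}$, so $\beta_0 \beta_1 \cdots \beta_p$ is the factor of $t$ beginning at position $k/2$, and the second letter of that block is $\widehat{\beta_j}$. Applying the period relation at an even index $i = 2j$ lands, because $p$ is odd, on the \emph{second} letter of a later block and gives $\beta_j = \widehat{\beta_{j+(p-1)/2}}$; applying it at an odd index $i = 2j+1$ lands on the \emph{first} letter of a later block and gives $\widehat{\beta_j} = \beta_{j+(p+1)/2}$. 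Combining these two families of relations yields $\beta_\ell = \beta_{\ell+1}$ for all $\ell$ with $(p-1)/2 \le \ell \le p-1$, hence a run $\beta_{(p-1)/2} = \beta_{(p-1)/2+1} = \cdots = \beta_p$ of $(p+3)/2 \ge 3$ equal letters of $t$ --- a forbidden $000$ or $111$. Either way we reach a contradiction, so $t$ is overlap free. The main obstacle is the index bookkeeping in these last two cases: arranging the extracted factor in the even case to have length exactly $p+1$ (so it is an overlap and not merely a square $zz$), and making the two block relations in the odd case overlap in precisely the range that forces a run of three; the even/odd alternatives for $k$ are genuine but routine.
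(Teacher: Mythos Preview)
Your argument is correct and is essentially the classical descent proof (going back to Thue): take a shortest overlap, split on the parity of the period, and use the recursion $t_{2n}=t_n$, $t_{2n+1}=\widehat{t_n}$ either to produce a shorter overlap (even period) or to force a cube of a letter (odd period). The bookkeeping you flag as the main obstacle checks out: in the even case the even-indexed positions $k,k+2,\dots,k+2p$ really do project to a length-$(p+1)$ factor with period $p/2$, and in the odd case the two families $\beta_j=\widehat{\beta}_{j+(p-1)/2}$ and $\widehat{\beta}_j=\beta_{j+(p+1)/2}$ over $0\le j\le (p-1)/2$ combine to $\beta_\ell=\beta_{\ell+1}$ for $(p-1)/2\le \ell\le p-1$, giving a run of $(p+3)/2\ge 3$ equal letters in $t$.

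There is nothing to compare against, however: the paper does not prove this theorem. It is stated with a \qed\ as a well-known fact, with a pointer to Chapter~2 of Lothaire's \emph{Combinatorics on Words} for background on the Thue--Morse word. So you have supplied a (standard, complete) proof where the paper simply quotes the result.
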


  \begin{lemma}\label{lem:t_00_11}
    Let $w$ be a non-empty even length privileged factor of $t$. Then $00$ or $11$ is a factor of $w$.
  \end{lemma}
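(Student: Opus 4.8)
The plan is to argue by contradiction: suppose $w$ is a non-empty even-length privileged factor of $t$ containing neither $00$ nor $11$. Then $w$ is a factor of the infinite word $(01)^\omega$ or $(10)^\omega$, so $w$ is itself a conjugate of a power of $01$; since $|w|$ is even, $w$ is either $(01)^k$ or $(10)^k$ for some $k \geq 1$. The goal is to show that such a word cannot be privileged unless it fails to be a factor of $t$, and in fact $(01)^k$ for $k\geq 2$ is already not a factor of $t$ (because $t$ is overlap-free, and $(01)^2 = 010\cdot 1$ contains the overlap $010\,1$ — more precisely $0101$ contains $010$ as a prefix and as a suffix overlapping, i.e. $0101$ has period $2$ and length $4 > 2\cdot 2$, which is an overlap). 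Wait — one must be careful: $0101$ has length $4$ and period $2$, and an overlap is a factor of the form $avava$ with $a$ a letter and $v$ a word, i.e. length $2|av|+1$; $0101$ is not quite of that form. So instead I would use that $01010$ (length $5$) is an overlap and hence forbidden, which rules out $(01)^k$ and $(10)^k$ for $k \geq 3$. That leaves only the small cases $k=1$ and $k=2$ to handle directly.

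The heart of the argument is therefore: rule out $w \in \{01, 10, 0101, 1010\}$ as privileged factors of $t$ (granting that they contain no $00$ or $11$). For $w = 01$: a privileged word of length $2$ must be a complete first return to a privileged word of length $1$, i.e. to a single letter, so it has the form $aa$; hence $01$ and $10$ are not privileged at all. For $w = 0101$: as a privileged word of length $4$, it is a complete first return to a shorter privileged word $v$ with $|v| \in \{1,2\}$. If $|v|=1$, then $v = a$ and $w = a\hat a^{\ell} a$, which is never $0101$. If $|v|=2$, then $v$ is privileged of length $2$, hence $v = aa$, and $w = aa\cdots aa$ begins and ends with $aa$; but $0101$ does not, contradiction. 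So $0101$ and $1010$ are not privileged either. Combining: no element of $\{01,10,0101,1010,(01)^k,(10)^k : k\geq 3\}$ — which is precisely the set of even-length words avoiding $00$ and $11$ — can be a privileged factor of $t$, establishing the lemma.

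The main obstacle, such as it is, is handling the smallest cases cleanly using only the recursive definition of privileged words (the $k=1$ and $k=2$ cases), and making sure the overlap-freeness of $t$ is invoked with the correct threshold (one needs $01010$, an overlap of length $5$, rather than $0101$). Everything else is bookkeeping: the classification of even-length $\{00,11\}$-avoiding words as powers of $01$ or $10$, and the observation that privileged words of length $2$ are exactly $00$ and $11$. I expect the write-up to be short.
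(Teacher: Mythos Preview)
Your argument is correct and is essentially the contrapositive of the paper's: the paper enumerates the privileged words of lengths $2$ and $4$ (namely $00,11$ and $0000,1111,0110,1001$) and observes that each contains $00$ or $11$, then uses overlap-freeness to conclude that every factor of $t$ of length $>4$ already contains $00$ or $11$; you instead enumerate the even-length words avoiding $00$ and $11$ and show each fails to be a privileged factor of $t$, using the same two ingredients (overlap-freeness for $k\geq 3$, small-case analysis for $k\leq 2$). One minor omission in your $k=2$ case: you should also rule out $|v|=3$, but this is immediate, since a length-$4$ word having a length-$3$ border must equal $aaaa$ and hence is not $0101$ or $1010$.
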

  \begin{proof}
    Using the fact that $t$ is overlap free, it can be easily shown that no factor of $t$ of length greater than four
    avoids factors $00$ and $11$. The only possible privileged factors of length two are $00$ and $11$. For privileged
    factors of length four, the possibilities are $0000$, $1111$, $0110$ and $1001$. Words $0000$ and $1111$ are not
    factors of $t$, but anyway the claim is proved.
  \end{proof}

  \begin{proposition}\label{prp:t_no_odd}
    The infinite Thue-Morse word $t$ doesn't have any privileged factors of length $n$, when $n$ is odd and $n \geq 5$.
  \end{proposition}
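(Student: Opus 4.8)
The plan is to leverage the two facts already established: the Thue-Morse word $t$ is overlap free, and (from \autoref{lem:t_00_11}) every non-empty even-length privileged factor of $t$ contains $00$ or $11$. Suppose for contradiction that $w$ is a privileged factor of $t$ of odd length $n \geq 5$. Since $|w| \geq 2$, by definition $w$ is a complete first return to a shorter privileged word $u$, which is both a prefix and a suffix of $w$ (its longest proper privileged border, by the second Lemma of \autoref{sec:privileged_words}). I would first argue that $u$ cannot be a single letter: if $u = a$, then $w$ is a complete first return to $a$ with no internal occurrence of $a$, so $w = a\hat{a}^{n-2}a$; but $\hat{a}^{n-2}$ with $n-2 \geq 3$ is a factor $\hat a\hat a\hat a$, and together with its neighbours this forces an overlap (indeed $aaa$ or a cube of $\hat a$ — more precisely $\hat a \hat a \hat a$ is already forbidden in $t$), contradiction. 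So $|u| \geq 2$.

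Next I would examine the parity of $|u|$. Since $|w| = n$ is odd and $w$ starts and ends with $u$, if the two occurrences of $u$ in $w$ do not overlap then $|w| \geq 2|u|$ is even unless there is a nonempty gap; writing $w = u\lambda u$ with the two marked occurrences of $u$ being the first and last, we have $n = 2|u| + |\lambda|$ with $|\lambda| \geq 0$, but $\lambda$ could be empty only if $n = 2|u|$ is even, so in fact $|\lambda| \geq 1$ whenever the occurrences are disjoint; if they overlap, $n < 2|u|$. The key dichotomy: either $|u|$ is even, or $|u|$ is odd. If $|u|$ is even, then by \autoref{lem:t_00_11} $u$ itself contains $00$ or $11$; I would then push this down recursively — $u$ is privileged of even length, so this does not immediately yield the contradiction, and instead I should track the \emph{odd} privileged factors. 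So the cleaner route: since $n$ is odd, consider the longest proper privileged border $u$; I claim one can choose the chain of privileged borders $w = w_0 \supset u = w_1 \supset w_2 \supset \cdots$ down to a letter, and somewhere in this chain the length must drop from odd to even or vice versa. Focus on the step where we pass from an odd-length privileged factor $v$ (with $|v| \geq 3$, since $n \geq 5$ forces at least one such intermediate step or $v = w$ itself) to its longest proper privileged border $u$.

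The heart of the argument, and the step I expect to be the main obstacle, is analyzing this single reduction. Here $v$ is an odd-length privileged factor of $t$, $v$ is a complete first return to $u$, and I want to derive that $00$ or $11$ must appear in a position that creates an overlap in $t$. Write $v = aya$ where $a$ is the first letter and $y$ the (palindromic, by richness of $t$ — indeed $t$ is rich by \autoref{cor:sturmian_rich}? No: $t$ is \emph{not} Sturmian, but we do not need richness) middle part. Since $t$ is overlap free, $v$ has no factor $xxx$ and no factor $zzx$ where $zz$ overlaps. Using \autoref{lem:t_00_11} applied to suitable even-length privileged sub-factors, together with the overlap-free constraint, I would show that the returns to $u$ inside $v$ are forced into a configuration ($a\hat a a$-type or $aa$-type returns only) whose concatenation has a length of the wrong parity, or which embeds a forbidden overlap. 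Concretely, if $|u| = 1$ we already handled it; if $|u| = 2$ then $u \in \{00, 11\}$ is itself the occurrence of a square, and a complete first return to $aa$ of odd total length, avoiding overlaps, forces $aa\lambda aa$ with $\lambda$ not containing $aa$; then $aa(a\hat a)^t a$-type analysis (as in the proof of \autoref{prp:balanced_rich}, Case 1) gives even length, contradiction; if $|u| \geq 3$, then $u$ has its own privileged border of strictly smaller length and we can induct, the base of the induction being the length-$2$ and length-$1$ cases already dispatched. The delicate point throughout is controlling parities: I must ensure that every complete-first-return step between privileged factors of $t$, given overlap-freeness, changes length by an amount of fixed parity, so that starting from an odd $n \geq 5$ we are eventually forced into a length-$3$ or length-$2$ privileged factor that cannot sit inside an overlap-free word without contradiction — the only odd privileged factors $t$ can afford are those of length $\leq 3$, exactly as claimed.
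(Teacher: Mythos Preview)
Your proposal has a genuine gap: it never uses the one structural fact about $t$ that does the real work in the paper's proof, namely the $\{01,10\}$-block factorization coming from $t=\varphi(t)$. Overlap-freeness alone plus \autoref{lem:t_00_11} is not enough. Concretely, your treatment of the case $|u|=2$ imports the balancedness reasoning from \autoref{prp:balanced_rich} (``$aa(a\hat a)^t a$-type analysis \ldots gives even length''), but the Thue--Morse word is not balanced, and a complete first return to $aa$ in an overlap-free word is not forced into that shape; for instance $001101101001100$-style patterns are not excluded by overlap-freeness alone. The paper instead observes that in $t$ every occurrence of $00$ (or $11$) must straddle a block boundary of the $\{01,10\}$-factorization, so the position of $00$ inside $w$ fixes the parity of the alignment of $w$ against the blocks; if $|w|$ is odd, the copy of $00$ in the prefix $u$ and the copy in the suffix $u$ force incompatible alignments, a contradiction. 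This single parity argument dispatches \emph{all} even $|u|$ at once, with no induction needed.

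Your handling of odd $|u|$ is also incomplete. You correctly rule out $|u|=1$, but you never address $|u|=3$ (i.e.\ $u\in\{010,101\}$), and your proposed induction for $|u|\geq 3$ has no stated hypothesis and no mechanism: the chain of privileged borders can mix even and odd lengths arbitrarily, and there is no ``fixed parity'' for the length drop at each step. The paper proceeds by (implicit) induction on $|w|$: if $|u|$ is odd and $\geq 5$, then $u$ itself would be a shorter odd-length privileged factor, excluded by induction; the remaining case $|u|=3$ is then eliminated by a direct search using the $\{01,10\}$-factorization, showing that the only candidate return $0101\,10\,0101$ is not a factor of $t$. Without the block factorization you have no tool to close either the $|u|$ even case or the $|u|=3$ case.
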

  \begin{proof}
    Let $w$ be an privileged factor of $t$ of odd length, which is a complete first return to a privileged word
    $u$. Denote $x = 01$ and $y = 10$.

    Assume first that $|u|$ is odd. Since $000$ and $111$ are not factors of $t$, it must be that $|u| > 1$. Moreover
    $|w| > 5$, since if $|w| = 5$, then the occurrences of $u$ in $w$ would need to overlap, and $t$ is
    overlap free. We need to only prove that $u$ can't be $010$ or $101$ (the privileged factors of $t$ of length three).
    We prove that $u$ can't be $010$, the other case is symmetric. 
    Assume first that the factorization of $w$ over $\{x,y\}$ matches from the beginning of $w$. So
    we are looking for a factor of $t$ starting and ending with $xx$, and containing no internal
    occurrences of $xx = 0101$ or $yy = 1010$. Using the fact that $t$ is overlap free, one can by inspection
    deduce that the only possibility is $xxyxx$. However $xxyxx$ is not a factor of $t$. Assume then
    that the factorization over $\{x,y\}$ doesn't match from the beginning. Now if $u=010$, then $u$ must
    be preceded by $1$ in $t$. Thus we would have found a complete first return to $101$, say $v$, of length $|w|$,
    and the factorization of $v$ over $\{x,y\}$ would match from the beginning. Earlier it was proved that such
    a factor $v$ can't exist.

    Assume then that $|u|$ is even. By \autoref{lem:t_00_11} $u$ contains $00$ or $11$ as a factor, say it contains $00$.
    Suppose that $00$ occurs at an even position in the prefix $v$ of $w$. Then since $|w|$ is odd, it must
    be that $00$ occurs at an odd position in the suffix $v$. Therefore $w$ doesn't match any factorization over
    $\{x,y\}$. If $00$ occurs at an odd position in the prefix $v$, one arrives at a contradiction using
    a symmetric argument.
  \end{proof}

  Now it's also true that the Thue-Morse word doesn't contain any odd palindrome of length greater than three
  (for a proof see \cite{2008:palindromic_lacunas_of_the_thue-morse_word}).
  However the number of even length privileged factors can't be calculated in the same way as we did earlier with Sturmian
  words, since the Thue-Morse word isn't rich. For instance the following factor of $t$ is not rich: $11010011$.

  The case of even length privileged factors is more complicated. So far it's not known to the author how to evaluate
  the number of even length privileged factors in $t$. In the next table there are some values for $\A_n(t)$ for even $n$.
  The results are based on a computer search.

  \vspace{1em}
  \begin{center}
  \begin{tabular}{|c|c|c|c|c|c|c|}
  \hline
  2-10 & 12-20 & 22-30 & 32-40 & 42-50 & 52-60 & 62-70 \\ \hline
  2    & 4     & 4     & 14    & 8     & 0     & 0 \\
  2    & 0     & 8     & 14    & 4     & 0     & 0 \\
  4    & 0     & 8     & 6     & 2     & 0     & 2 \\
  8    & 2     & 4     & 4     & 2     & 0     & 2 \\
  8    & 2     & 6     & 8     & 0     & 0     & 2 \\
  \hline
  \end{tabular}
  \end{center}
  \vspace{1em}

  There are interesting gaps of zeros in $\A_n(t)$. For instance $\A_n(t) = 0$ for
  $81 \leq n \leq 85$, $113 \leq n \leq 117$, $145 \leq n \leq 149$, $177 \leq n \leq 181$ and $189 \leq n \leq 257$.
  Based on the computer searches and an educated guess, we state the following conjecture:

  \begin{conjecture}
    There exist arbitrarily long (but not infinite) gaps of zeroes in the values of $\A_n(t)$.
  \end{conjecture}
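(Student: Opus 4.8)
The plan is a renormalization (desubstitution) argument driven by the self\nobreakdash-similarity $t = \varphi(t)$. By \autoref{prp:t_no_odd} we already know $\A_n(t) = 0$ for every odd $n \ge 5$, so a gap of zeroes around some scale is obtained as soon as we produce a long run of \emph{even} integers $n$ with $\A_n(t) = 0$; the statement thus reduces to: for every $L$ there is an interval of $L$ consecutive even integers on which $\A(t)$ vanishes. (That the gaps are finite is separate and much easier: privileged factors of $t$ occur at a cofinal set of even lengths, which one obtains for instance by running the correspondence below backwards from a few explicit privileged factors, so $\A_n(t)$ cannot be eventually zero.)

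The engine is a desubstitution lemma, which I would establish first: there is an absolute constant $c$ such that for all sufficiently large $m$, if $t$ has a privileged factor of length $2m$, then $t$ has a privileged factor of some length $m'$ with $|m' - m| \le c$. The proof uses the classical recognizability (bounded synchronization delay) of the Thue--Morse substitution: fix an occurrence of a privileged factor $w$ with $|w| = 2m$; since each $\varphi$-block has length two and $|w|$ is even, after deleting at most one letter from each end that occurrence is exactly $\varphi(v)$ for some $v \in \Fac(t)$ with $|v| \in \{m-1, m\}$. Let $u$ be the longest proper privileged border of $w$; by the earlier lemmas $w$ is a complete first return to $u$, and since $u \in \Pri(t)$, \autoref{prp:t_no_odd} forces $|u|$ to be at most $3$ or even. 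If $|u|$ exceeds the recognizability constant, recognizability pins down the $\varphi$\nobreakdash-block alignment of \emph{every} occurrence of $u$ in the same way relative to $u$, so $u$ desubstitutes to a word $u'$ and the two occurrences of $u$ in $w$ correspond to exactly two occurrences of $u'$ in $v$; hence $v$, up to trimming a bounded amount, is a complete first return to $u'$, and $u'$ is privileged by strong induction on length (its length is again at most $3$ or even, by \autoref{prp:t_no_odd}), so $v$ is privileged. The finitely many small\nobreakdash-border cases, together with the genuinely block\nobreakdash-crossing patterns $00$ and $11$, are cleared by direct inspection as in the proof of \autoref{prp:t_no_odd}.

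Granting the lemma, the conclusion follows by bootstrapping. Suppose $\{a, a+1, \dots, b\}$ is an interval on which $\A(t)$ vanishes, with $a$ large and $b - a > 4c$. By the contrapositive of the lemma, $\A_{2m}(t) = 0$ whenever $[m-c,\, m+c] \subseteq [a,b]$, i.e. for every even integer in $[2a+2c,\, 2b-2c]$; the odd integers in that range are $\ge 5$, so by \autoref{prp:t_no_odd} they too give value $0$. Hence $[2a+2c,\, 2b-2c]$ is again an interval of zeroes, of length $2(b-a) - 4c > b-a$. Iterating, the lengths obey $\ell_{i+1} = 2\ell_i - 4c \to \infty$, producing arbitrarily long gaps. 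A legitimate base case is supplied by the computed data: the gap $189 \le n \le 257$ has length $68$, and since the recognizability constant of Thue--Morse, hence $c$, is small, $68 > 4c$ holds comfortably.

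The main obstacle is the desubstitution lemma, and within it the bookkeeping at the two ends of $w$ and around the border $u$: one must show that a complete first return desubstitutes to a complete first return up to a bounded correction, which requires controlling the $\varphi$\nobreakdash-block alignment of every occurrence of $u$ inside $w$ (not merely the two extreme ones) and ruling out the low\nobreakdash-complexity borders $00$, $11$ that straddle block boundaries. Keeping the constant $c$ small enough that $4c$ stays below an explicitly verifiable gap length is part of the work as well, though any bound $c \le 17$ suffices given the data above.
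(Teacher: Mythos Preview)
The paper states this as an open \emph{conjecture} supported only by numerical evidence; it gives no proof, so there is nothing in the paper to compare your argument against. What you have submitted is an attempted proof of an open problem, and it has a genuine gap.

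The issue is in the desubstitution lemma, at the step ``$u'$ is privileged by strong induction on length.'' The lemma as you state it only asserts the \emph{existence} of some privileged factor of length within $c$ of $m$; it does not assert that the particular word $u'$ obtained by desubstituting the border $u$ is privileged. So the inductive hypothesis you are actually invoking is the stronger claim ``desubstitution sends privileged factors of $t$ to privileged factors of $t$,'' and that claim is false. Concretely, $w = 0110100110 = \varphi(01101)$ is a privileged factor of $t$ (it is a complete first return to $0110$, which is privileged), yet its desubstitution $01101$ begins with $0$ and ends with $1$ and is therefore not privileged. More to the point for your induction, the border $u=0110$ desubstitutes to $u'=01$, which is not privileged; so $v=01101$, although genuinely a complete first return to $u'$, cannot be certified privileged by your mechanism. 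You relegate short borders to ``direct inspection,'' but that does not rescue the argument: for large $|w|$ the descending chain of privileged borders $u, u_1, u_2, \ldots$ still terminates at a short privileged word such as $0110$ or $1001$, and it is exactly there that the desubstituted chain breaks, so the stronger hypothesis cannot be propagated back up either.

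None of this refutes the weaker lemma itself---in the example above there do exist privileged factors of $t$ of lengths $4$ and $6$, within $1$ of $m=5$---but it does mean your proposed proof of the lemma does not go through; you would need a different device to locate a privileged factor near length $m$ that does not rely on $u'$ being privileged. One minor remark: the ``not infinite'' part is easier than you suggest and does not require running the correspondence backwards, since by \autoref{cor:all_rich_privileged} every prefix of $t$ of length $n$ already has $n+1$ distinct privileged factors, so their lengths are unbounded and $\A_n(t)$ is not eventually zero.
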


  \section{Acknowledgements}
  I thank my advisor Tero Harju for discussions on the practical matters of mathematics and for introducing me
  to combinatorics on words. I also thank my other advisor Luca Zamboni for suggesting the study of privileged words
  and for helping me during my research. Finally I thank Markus Whiteland for useful discussion sessions.

  \printbibliography
  
\end{document}